\documentclass[reqno, 11pt]{amsart}
\usepackage{lineno} 
\usepackage{etoolbox}

\usepackage{amsmath}
\usepackage{graphicx}
\usepackage{amsfonts}
\usepackage{amssymb}
\usepackage{color}
\usepackage{mathrsfs}
\usepackage{epsfig}
\usepackage{url}
\usepackage{mathrsfs,a4wide}
\usepackage{calligra}
\usepackage{amsmath}
\usepackage{amssymb}
\usepackage{graphicx}
\usepackage{amsthm}
\usepackage{thmtools}
\usepackage{enumitem}
\usepackage{hyperref, cleveref}
\usepackage{mathrsfs}
\usepackage{tikz}

\theoremstyle{plain}

\theoremstyle{definition}
\theoremstyle{remark}
\numberwithin{equation}{section}

\theoremstyle{plain} \declaretheorem[numberwithin = section, name = Theorem,
 refname = {Theorem}, Refname = {Theorem}]{thm}
\theoremstyle{plain} \declaretheorem[numberlike = thm, name = Proposition,
 refname = {Proposition}, Refname = {Proposition}]{prop}
\theoremstyle{plain} \declaretheorem[numberlike = thm, name = Lemma,
refname = {Lemma}, Refname = {Lemma}]{lem}
\theoremstyle{plain} \declaretheorem[numberlike = thm, name = Definition,
 refname = {Definition}, Refname = {Definition}]{df}
\theoremstyle{definition} 
\theoremstyle{definition} \declaretheorem[numberlike = thm, name = Remark,
refname = {Remark}, Refname = {Remark}]{rem}
\theoremstyle{plain} \declaretheorem[numberlike = thm, name = Corollary,
refname = {Corollary}, Refname = {Corollary}]{cor}

\DeclareMathOperator {\R}{\mathbb{R}}
\DeclareMathOperator {\BV}{BV}
\DeclareMathOperator{\weak*}{\begin{picture}(10,4)
							\put(0,-2){$\rightharpoonup$}
							\put(3,3){$\ast$}
							\end{picture}}
\DeclareMathOperator {\h}{\mathcal{H}}
\DeclareMathOperator{\gammalimsup}{\Gamma\text{-}\limsup}

\DeclareMathOperator{\conv}{\text{conv}}

\title[Nonlocal phase transitions]
{$\Gamma$-convergence for nonlocal phase transitions involving the $H^{1/2}$ norm and surfactants}

\author[G. Fusco]{Giuliana Fusco}
\address[Giuliana Fusco]{Scuola Superiore Meridionale, via Mezzocannone 4, 80134 Napoli, Italy}
\email[]{g.fusco@ssmeridionale.it} 

\author[T. Heilmann]{Tim Heilmann}
\address[Tim Heilmann]{Technische Universit\"at M\"unchen, Boltzmannstrasse 3, 85748 Garching, Germany	}
\email[]{heilmant@ma.tum.de}

\begin{document}
	
\begin{abstract}
We study functionals
\begin{align*}
	F_\varepsilon (u,\rho) &:= \frac{1}{\varepsilon} \int_\Omega W(u)  \, dx
		+ \frac{1}{|\ln(\varepsilon)|} \int_\Omega  \int_\Omega 
		\frac{(u(y) - u(x))^2}{|y - x|^{N+1}} \, dy \,dx \\
	&\quad\quad + \frac{1}{|\ln(\varepsilon)|} \int_\Omega  \left| \int_\Omega 
		\frac{(u(y) - u(x))^2}{|y - x|^{N+1}} \, dy - 
		\rho(x) \right| \,dx
\end{align*}
for a double-well potential $W$ and a nonlocal, critically scaled gradient-like term,
together with a surfactant term. We show compactness in the space of $BV$ functions on $\Omega$
and the $\Gamma$-convergence to an energy  given as local perimeter-type functional, depending
also on the limit density of surfactant on the interface, plus the total variation of the surfactant measure
away from the interface.
\vskip5pt
\noindent
\textsc{Keywords}: Phase transitions; Surfactants; $\Gamma$-convergence. 
\vskip5pt
\noindent
\textsc{AMS subject classifications:}
49J10
49J45
\end{abstract}

\maketitle

\section{Introduction}
We consider a model for phase transitions with nonlocal interactions and with surfactants,
which is a modification of the classical van der Waals-Cahn-Hillard energy. The latter is a phase-field
approximation of the perimeter functional and it has been analyzed by Modica and Mortola in terms
of $\Gamma$-convergence in the limit as the transition length vanishes in \cite{m, mm}. 
We study a modified version of the  Cahn-Hilliard functional replacing the gradient term with a suitable rescaling
of the $\frac{1}{2}$-Gagliardo seminorm and, in the spirit of \cite{fms},
an additional integral term accounting for a fluid-surfactant interaction to model the adsorption
of surface active molecules onto phase interfaces.
We obtain that the effective surface
tension energy of a phase transition decreases as the density of the surfactant at the phase interface
increases and stays below a given constant $k$, while excess surfactant increases the limit energy.
 We point out that the variational analysis of energies involving an extra surfactant term has been recently studied also in \cite{ch1, ch2}.
 
Given a regular bounded open set $\Omega \subset \R^N$ (the region occupied by the fluid and the
 surfactant), one considers a scalar function $u:\Omega \to \R$ and a non-negative function 
$\rho:\Omega\to [0,+\infty)$ representing the order parameter of the fluid and the density of the surfactant, respectively.
For $u,\rho \in L^1(\Omega)$ we set $\mu=\rho\,dx$ and we define for $\varepsilon>0$ the family of energy functionals
\begin{align*}
	F_\varepsilon(u,\mu) :=&  \frac{1}{\varepsilon} \int_\Omega W(u(x)) dx 
		+ \frac{1}{|\ln(\varepsilon)|} \int_\Omega  \int_\Omega 
		\frac{(u(y) - u(x))^2}{|y - x|^{N+1}} \, dy \,dx \\
	&\quad\quad + \frac{1}{|\ln(\varepsilon)|} \int_\Omega  \left| \int_\Omega 
		\frac{(u(y) - u(x))^2}{|y - x|^{N+1}} \, dy - 
		\rho(x) \right| \,dx \,.
\end{align*}
Assuming $W:\R \rightarrow [0,+\infty)$ to be a double-well potential with wells at $\alpha$ and $\beta$,
we are interested  in the asymptotic behaviour as $\varepsilon$ tends to zero of $F_\varepsilon(u,\mu)$
in the sense of $\Gamma$-convergence (see \cite{b, dm}). The first two integral terms in $F_\varepsilon(u,\mu)$ define the
functional $NL_\varepsilon$, a non-local version of the classical Cahn-Hillard functional, namely
\begin{equation}
\label{NL_DEFINITION}
	NL_\varepsilon(u) :=  \frac{1}{\varepsilon} \int_\Omega W(u(x)) dx 
		+ \frac{1}{|\ln(\varepsilon)|} \int_\Omega  \int_\Omega 
		\frac{(u(y) - u(x))^2}{|y - x|^{N+1}} \, dy \,dx \,.
\end{equation}
The variational analysis for $NL_\varepsilon$ as $\varepsilon$ vanishes has been carried out in
the case for one space dimension $N=1$ in \cite{abs} and in arbitrary space dimension in
 \cite{sv1} using the results from \cite{sv2} and \cite{psv}, and with a different proof in \cite{h}.
In particular it has been proved the pre-compactness in $BV(\Omega;\{\alpha,\beta\})$ of sequences of
phase-fields $u_\varepsilon$ with uniformly bounded $NL_\varepsilon$ energy
and it has been computed the $\Gamma$-limit of $NL_\varepsilon$ as
$\varepsilon$ tends to zero with respect to the $L^1$ convergence.
Roughly speaking, the limit $u$ of a converging subsequence of energy bounded 
$u_\varepsilon$ is forced to take the values $\alpha$ and $\beta$ almost everywhere,
partitioning $\Omega$ in the two sets $\{u=\alpha\}$ and $\{u=\beta\}$.
These two sets are interpreted as the pure phases of the fluid while their
common boundary, corresponding to the jump set $S_u$ of the function $u$, as the phase interface. The
effective asymptotic energy of the system, captured by the $\Gamma$-limit of $NL_\varepsilon$, is proved
to be proportional to the surface measure of the interface $S_u$.
Such an energy is achieved along a sequence $u_\varepsilon$ of phase fields whose non-local
energy
$ 
\displaystyle\int_\Omega\frac{|u_\varepsilon(y) - u_\varepsilon(x)|^2}{|y-x|^{N+1}} \,dy$ 
concentrates on $x\in S_u$. In this perspective, one can understand the role of the third term in
$F_\varepsilon$ as modelling the interaction between the surfactant and the fluid and favouring the
phases of minimizers of $NL_\varepsilon$ to separate where the surfactant is present.

We would like to remark that nonlocal Modica-Mortola type energies of a different type
have been studied in \cite{ab,ab2}. There, the energy functionals have the form
\begin{equation*}
	\frac{1}{\varepsilon} \int_\Omega W(u)  \, dx +
		\varepsilon \int_{\Omega}\int_{\Omega} J_\varepsilon(y-x) \frac{(u(y) - u(x))^2}
		{\varepsilon^2} \, dy\, dx \,.
\end{equation*}
If the convolution  kernels $J_\varepsilon(\cdot) = \varepsilon^{-n} J(\cdot / \varepsilon)$
satisfy 
\begin{equation}
\label{kernel_bd_hp}
    \int_{\R^N} J(h) (|h| \wedge |h|^2) \,dh < \infty,
\end{equation} 
in \cite{ab} it is shown that,
as $\varepsilon \rightarrow 0^+$, the
limit energy functional is finite on the set of those BV functions on $\Omega$ that
attain only the values $\alpha, \beta$
and is given via a cell formula depending on $J$ resulting in a possibly anisotropic perimeter functional.
Let us note that the convolution kernel $h \mapsto h^{-(N+1)}$ in the definition of $NL_\varepsilon$ in \eqref{NL_DEFINITION}
does not satisfy the stated boundedness assumption \eqref{kernel_bd_hp} and it can be shown that
the cell formula would give the value $+ \infty$. This is the reason to use another scaling in the  definition of $NL_\varepsilon$ (and $F_\varepsilon$).

The main result of this paper is stated in Theorem
\ref{thm:Gammalim}, and its proof mostly relies on the techniques developed in \cite{h}.
It shows that carrying out the $\Gamma$-limit of (an extension of) $F_\varepsilon$ with respect to the strong 
$L^1$ convergence of the phase fields and the weak$\ast$-convergence of the surfactant measures, one
obtains a limit functional which is finite for $u\in BV(\Omega, \{ \alpha,\beta \})$ and 
$\mu\in \mathcal{M}_+(\Omega)$ (the space of positive Radon measures) where it takes the form
\begin{align*}
	F(u,\mu) := \int_{S_u} k + \left| k - \frac{d \mu_a} 
		{d \h^{N-1} \llcorner S_u} \right|  \,d\h^{N-1} + |\mu_s|(\Omega) \,.
\end{align*}
In the formula $S_u$ denotes the jump set of $u$; $\mu_a$,  $\mu_s$ denote the
absolutely continuous part and the singular part of $\mu$ respectively with respect to $\h^{N-1} \llcorner S_u$; $k$ is a constant depending on $\alpha, \beta$ and the space dimension $N$.
Hence, we see that increasing the surfactant density at the interface lowers the
energy cost of the phase boundary as long as the density does not exceed $k$,
while increasing the surfactant density further increases the energy cost, and adding surfactant
away from the interface immediately costs energy.
In this respect, our limit energy behaves as the one found as discrete-to-continuum 
$\Gamma$-limit, modeling the coarse-graining process originating from the microscopic 
Blume-Emery-Griffiths ternary surfactant model, see \cite{abcs, acs} (see also \cite{cfs1, cfs2} for their discrete-to-continuum flat flow).
Note that this behaviour differs from the one found in \cite{fms} for the local Modica-Mortola model with surfactant; i.e.
\begin{equation*}
	\frac{1}{\varepsilon} \int_\Omega W(u)  \, dx + \varepsilon \int_\Omega |\nabla u|^2 \,dx
		+ \varepsilon \int_\Omega (\rho - |\nabla u|)^2 \,dx \,,
\end{equation*}
where the limit energy only depends on the surfactant density on the interface, and moreover
is non-increasing in the surfactant density on the interface.

\section{Compactness and \(\Gamma\)-convergence}
\subsection{Notation}
We consider sets in $\R^N$ for $N \geq 2$ equipped with its standard basis $e_1, \dots, e_N$.
We write $\conv(S)$ for the convex hull of a set $S \subset \R^N$. We write $S \Delta T$ for the symmetric difference of two
 sets $S$ and $T$; sets in $\R^{N-1}$ are identified with the corresponding subsets in the hyperplane 
$\{x_N = 0\} \subset \R^N$.
For a point $x = (x_1, \dots, x_N) \in \R^N$ we write $x' := (x_1,\dots,x_{N-1})$ and also $x = (x', x_N)$.
We write $Q_R(x) := x + [-R/2, R/2]^N$ for the $N$-cube with side length $R$ centered at $x \in \R^N$
and $Q_R'(x') := x' + [-R/2, R/2]^{N-1}$ for the corresponding $(N-1)$-cube.
Similarly we write $B_R(x) := \{y \in \R^N \,:\, |x-y| < R\}$ and
$B_R'(x') := \{y' \in \R^{N-1} \,:\, |x'-y'| < R\}$.
We denote by $\mathcal{M}_+(\Omega)$ the space of non-negative Radon measures on $\Omega$.
For the level sets of a function $f: A \times B\rightarrow \R$ we write
$\{ f(x,y) = a \} := \left\{ (x,y) \in A \times B \,:\, f(x,y) = a  \right\}$.
For a function of bounded variation $u \in \BV (\Omega)$ we write $S_u$ for its approximate jump set,
as defined e.g. in \cite{afp}.
We write $C$ for constants that may change their value from line to line.

\subsection{The energy functional}
We fix a bounded regular domain $\Omega \subset \R^N$ and consider the energy
$F_\varepsilon : L^1(\Omega) \times L^1(\Omega,[0,\infty)) \rightarrow [0,+ \infty]$ defined as
\begin{align*} 
	&F_\varepsilon (u_\varepsilon, \rho_\varepsilon) := \frac{1}{\varepsilon}
		\int_\Omega W(u_\varepsilon)  \, dx +
		\frac{1}{|\ln(\varepsilon)|} \int_\Omega  \int_\Omega \frac{(u_\varepsilon(y) - u_\varepsilon(x))^2}
		{|y - x|^{N+1}} \, dy \,dx \\
	&\quad + \frac{1}{|\ln(\varepsilon)|} \int_\Omega  \left| \int_\Omega 
		\frac{(u_\varepsilon(y) - u_\varepsilon(x))^2}{|y - x|^{N+1}} \, dy - 
		\rho_\varepsilon(x) \right| \,dx \,. \nonumber
\end{align*}
We extend the definition of $F_\varepsilon$ to the functional (not renamed) $F_\varepsilon: L^1(\Omega) \times \mathcal{M_+}(\Omega)\rightarrow [0,+\infty]$
by setting
\begin{equation}
	\label{energy}F_\varepsilon(u_\varepsilon,\mu_\varepsilon) := \left\{
		\begin{array}{lr}
			F_\varepsilon(u_\varepsilon, \rho_\varepsilon), & \mu_\varepsilon
				 = \rho_\varepsilon \mathcal{L}^N, \\
			+\infty, & \text{otherwise.}
		\end{array}
	\right.
\end{equation}

Moreover, we define, for a given measurable set $A \subset \Omega$, the localized functional
$F_\varepsilon(u_\varepsilon, \rho_\varepsilon, A)$ as 
\begin{align*}
	&F_\varepsilon (u_\varepsilon, \rho_\varepsilon,A) := \frac{1}{\varepsilon}
		\int_A W(u_\varepsilon)  \, dx +
		\frac{1}{|\ln(\varepsilon)|} \int_A  \int_A \frac{(u_\varepsilon(y) - u_\varepsilon(x))^2}
		{|y - x|^{N+1}} \, dy \,dx \\
	&\quad + \frac{1}{|\ln(\varepsilon)|} \int_A  \left| \int_A 
		\frac{(u_\varepsilon(y) - u_\varepsilon(x))^2}{|y - x|^{N+1}} \, dy - 
		\rho_\varepsilon(x) \right| \,dx \,,
\end{align*} 
and the extension to $L^1(\Omega) \times \mathcal{M}_+(\Omega)$ similarly as above.

We assume that the potential $W : \R \rightarrow [0, +\infty)$ satisfies for some $\alpha, \beta \in \R$,
$\alpha < \beta$
\begin{align*}
	&W(x) = 0 \, \Leftrightarrow \, x \in \{\alpha, \beta\} \\
	&W \text{ has at least linear growth at } \pm \infty\,.
\end{align*}
We define the dimensional constants
\begin{equation}\label{kappa}
	\omega_{n-1} :=\h^{N-1} \left( B_1'(0)\right) \quad \text{and}
		\quad k:= 2(\beta - \alpha)^2\omega_{n-1}
\end{equation}
and the limit energy functional $F: L^1(\Omega)\times \mathcal{M}_+(\Omega) \rightarrow [0,\infty]$ as
\begin{equation} \label{limitenergy}
	F(u,\mu) := \left\{
		\begin{array}{lr}
		\displaystyle
			\int_{S_u} k + \left| k - \frac{d \mu_a}
			{d \h^{N-1} \llcorner S_u} \right|  \,d\h^{N-1} + |\mu_s|(\Omega),
			 &u \in \BV(\Omega, \{ \alpha,\beta \}) \\
			+\infty, &\text{otherwise.}
		\end{array} \right.
\end{equation}
Here, we denoted by $\mu_a$ (resp. $\mu_s$) the absolutely continuous (resp. singular) part of $\mu$
with respect to the measure $\h^{N-1} \llcorner S_u$.
We define for a given measurable set $A \subset \Omega$ the localized functional $F(u,\mu, A)$
as for $F_\varepsilon$ above.
Recall that for a function $u \in \BV(\Omega, \{ \alpha,\beta \})$, $S_u$ can be seen
equivalently as the reduced boundary of the set $\{ u = \beta \}$.

\subsection{The main results}
This section contains the main results of the paper. In what follows, for $\varepsilon_h \to 0^+$, we state the compactness of sequences 
$(u_h),(\rho_h)$ with equibounded energy $F_{\varepsilon_h}(u_h,\rho_h)$ as well as the 
$\Gamma$-convergence result for $F_\varepsilon$ as $\varepsilon \rightarrow 0^+$. 

\begin{thm}[Compactness] \label{compactness}
Let $F_{\varepsilon_h}$ be the functional defined in (\ref{energy}). Given sequences $\varepsilon_h \rightarrow 0^+$,
$u_h \in L^1(\Omega)$ and $\rho_h  \in L^1(\Omega, [0,\infty))$
such that $\sup_h \left\| \frac{\rho_h}{\ln \varepsilon_h} \right\|_{L^1(\Omega)} < \infty$ and
$\sup_h F_{\varepsilon_h}(u_h,\rho_h) < \infty$, then, up to subsequences, 
$u_h$ converge in $L^1(\Omega)$ to some $u \in \BV(\Omega, \{\alpha,\beta\})$ and
$\frac{\rho_h}{|\ln \varepsilon_h|} \mathcal{L}^N$
converge in the weak$\ast$ sense to some $\mu \in \mathcal{M}_+(\Omega)$.
\end{thm}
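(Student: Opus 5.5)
The proof splits into two independent parts: compactness of the phase fields $u_h$, and compactness of the rescaled surfactant measures. In both, the key observation is that the third term of $F_{\varepsilon_h}$ is nonnegative, so it can simply be dropped when bounding the phase-field part.

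\emph{Compactness of $u_h$.} Since the surfactant term is nonnegative, we have
\[
NL_{\varepsilon_h}(u_h) \le F_{\varepsilon_h}(u_h,\rho_h) \le C,
\]
with $NL_\varepsilon$ the functional in \eqref{NL_DEFINITION}. The compactness theorem for $NL_\varepsilon$ from the literature quoted in the introduction (\cite{abs} for $N=1$, \cite{sv1,h} in general dimension) states that sequences with equibounded $NL_{\varepsilon_h}$ energy are precompact in $L^1(\Omega)$ with limits in $\BV(\Omega,\{\alpha,\beta\})$. We invoke it directly. The linear growth of $W$ at infinity gives equi-integrability of $u_h$, since $\int_\Omega W(u_h)\le C\varepsilon_h\to 0$. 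This is exactly the hypothesis used in \cite{h}, so the result applies verbatim. If one wanted to reprove it, the main obstacle would be the critical logarithmic scaling. The idea there is to truncate $u_h$ to $[\alpha,\beta]$, slice into one-dimensional problems, and show that each transition between the wells costs at least $c|\ln\varepsilon_h|$ of the Gagliardo energy. This bounds the number of transitions on a.e. line and yields a uniform BV-type estimate for a nearby two-valued function. Here we take that step from the cited work.

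\emph{Compactness of the measures.} The measures $\mu_h := \frac{\rho_h}{|\ln\varepsilon_h|}\mathcal L^N$ are nonnegative, and by assumption
\[
\mu_h(\Omega)=\left\|\tfrac{\rho_h}{\ln\varepsilon_h}\right\|_{L^1(\Omega)}\le C .
\]
By the Banach--Alaoglu theorem (weak$\ast$ sequential compactness of bounded sets of Radon measures on $\Omega$, i.e.\ the dual of $C_0(\Omega)$), a subsequence converges weak$\ast$ to some $\mu\in\mathcal M_+(\Omega)$. Nonnegativity passes to the limit because $\int\varphi\,d\mu_h\ge 0$ for every $\varphi\ge0$.

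The uniform mass bound could also be recovered from the energy alone. By the triangle inequality,
\[
\frac{1}{|\ln\varepsilon_h|}\int_\Omega\rho_h \le \frac{1}{|\ln\varepsilon_h|}\int_\Omega\int_\Omega\frac{(u_h(y)-u_h(x))^2}{|y-x|^{N+1}}\,dy\,dx+\frac{1}{|\ln\varepsilon_h|}\int_\Omega\Big|\int_\Omega\frac{(u_h(y)-u_h(x))^2}{|y-x|^{N+1}}\,dy-\rho_h\Big|\,dx\le 2F_{\varepsilon_h}(u_h,\rho_h),
\]
so the extra assumption in the statement is in fact redundant. Finally, taking a common subsequence for the two parts gives the claim.
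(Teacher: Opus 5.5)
Your proposal is correct and follows essentially the same route as the paper. The paper also reduces to the surfactant-free compactness result of \cite{h} (Proposition~2.12) and then uses weak$\ast$ compactness of bounded sets in $\mathcal{M}_+(\Omega)$; dropping the nonnegative surfactant term, as you do, is exactly what makes that reduction work. Your additional observation is also correct: the mass bound on $\rho_h/|\ln\varepsilon_h|$ follows from the energy bound alone (with constant $1$ rather than $2$, since the potential term is nonnegative), so that hypothesis is redundant.
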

\proof
The result is a direct consequence of the compactness result for $F_\varepsilon$
with $\rho_\varepsilon \equiv 0$ from \cite[Proposition 2.12]{h}
and of the weak$\ast$ compactness of $\mathcal{M}_+(\Omega)$.\qed\\

In the following theorem we states our $\Gamma$-convergence result. As it is customary in this setting 
(see \cite{b, dm}) the $\Gamma$-convergence  will be understood with respect to the product topology
given by the strong $L^1$ topology of the functions and the weak$\ast$ topology of the measures. 
The proof of the result is a consequence of Proposition \ref{liminf} and Proposition \ref{limsup}
that will be proven in the Subsections \ref{subsec:liminf} and \ref{subsec:limsup}.\\

\begin{thm}[$\Gamma$-convergence]\label{thm:Gammalim}
Let $F_\varepsilon, F$ be defined as in (\ref{energy}) resp. (\ref{limitenergy}).
Then the following $\Gamma$-convergence result holds true.
\begin{equation*}
	\Gamma\text{-}\lim\limits_{\varepsilon \rightarrow 0^+} F_{\varepsilon}(\cdot, \cdot |\ln \varepsilon|) 
		= F \,.
\end{equation*}
\end{thm}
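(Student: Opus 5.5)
The proof splits into the two inequalities. The key reformulation is as follows. Along a sequence $(u_\varepsilon,\mu_\varepsilon)$ with $\mu_\varepsilon=\tilde\rho_\varepsilon\mathcal L^N$, we consider $F_\varepsilon(u_\varepsilon,\tilde\rho_\varepsilon|\ln\varepsilon|)$ and introduce the \emph{nonlocal energy density measures}
\begin{equation*}
	\nu_\varepsilon := g_\varepsilon\,\mathcal L^N, \qquad g_\varepsilon(x):=\frac{1}{|\ln\varepsilon|}\int_\Omega\frac{(u_\varepsilon(y)-u_\varepsilon(x))^2}{|y-x|^{N+1}}\,dy .
\end{equation*}
Dividing the third term by $|\ln\varepsilon|$ turns it into $\int_\Omega|g_\varepsilon-\tilde\rho_\varepsilon|\,dx=|\nu_\varepsilon-\mu_\varepsilon|(\Omega)$. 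Hence
\begin{equation*}
	F_\varepsilon(u_\varepsilon,\tilde\rho_\varepsilon|\ln\varepsilon|)=\frac1\varepsilon\int_\Omega W(u_\varepsilon)\,dx+\nu_\varepsilon(\Omega)+|\nu_\varepsilon-\mu_\varepsilon|(\Omega).
\end{equation*}
Everything then reduces to the behaviour of $\nu_\varepsilon$. For this I will use the results of \cite{h} on $NL_\varepsilon$ (compactness, localized liminf inequality, and recovery sequences). Here the $\Gamma$-limit of $NL_\varepsilon$ is $k\,\h^{N-1}(S_u)$ with $k$ as in \eqref{kappa}.

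\emph{Liminf inequality.} Let $u_\varepsilon\to u$ in $L^1$ and $\mu_\varepsilon\weak*\mu$ with bounded energy. By Theorem \ref{compactness}, $u\in\BV(\Omega,\{\alpha,\beta\})$. Since $\nu_\varepsilon(\Omega)$ is bounded, up to a subsequence realizing the liminf we have $\nu_\varepsilon\weak*\nu$.

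The first step is to show $\nu\ge\lambda:=k\,\h^{N-1}\llcorner S_u$. For an open set $A\Subset\Omega$, the localized liminf inequality of \cite{h} gives $\liminf NL_\varepsilon(u_\varepsilon,A)\ge k\h^{N-1}(S_u\cap A)$. Moreover $\nu_\varepsilon(A)$ dominates the nonlocal part of $NL_\varepsilon(u_\varepsilon,A)$, since its inner integral runs over all of $\Omega$, not just $A$. The $W$-term is only needed for $u\in\BV(\Omega,\{\alpha,\beta\})$, which we already know, so it can be dropped. It then follows that $\nu(\bar A)\ge\limsup\nu_\varepsilon(A)\ge\lambda(A)$, and by regularity of measures, $\nu\ge\lambda$.

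Next, weak$^*$ lower semicontinuity on the open set $\Omega$ gives
\begin{equation*}
	\liminf F_\varepsilon\ge\nu(\Omega)+|\nu-\mu|(\Omega).
\end{equation*}
We decompose $\nu=f\,\h^{N-1}\llcorner S_u+\nu_s$ with $f\ge k$, and $\mu=m\,\h^{N-1}\llcorner S_u+\mu_s$. On the absolutely continuous parts we use that $t\mapsto t+|t-m|$ is nondecreasing, so $f+|f-m|\ge k+|k-m|$. On the singular parts, $\nu_s(\Omega)+|\nu_s-\mu_s|(\Omega)\ge|\mu_s|(\Omega)$ by the triangle inequality. Together these give $F(u,\mu)$.

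\emph{Limsup inequality.} Since the $\Gamma$-limsup is lower semicontinuous, it suffices to treat a dense class. Suppose $m\in L^1(\h^{N-1}\llcorner S_u)$ is approximated in $L^1(\h^{N-1}\llcorner S_u)$ by restrictions of functions $m_j\in C_c(\Omega)$. Then $\mu_j:=m_j\h^{N-1}\llcorner S_u+\mu_s$ converges weak$^*$ to $\mu$, and $F(u,\mu_j)\to F(u,\mu)$. So we may assume $m$ is the restriction of some $\tilde m\in C_c(\Omega)$.

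Take the recovery sequence $u_\varepsilon$ of \cite{h}, so that $NL_\varepsilon(u_\varepsilon)\to k\h^{N-1}(S_u)$. Combined with the localized liminf above, this forces $\frac1\varepsilon\int W(u_\varepsilon)\to0$ and $\nu_\varepsilon\to\lambda$ narrowly, that is, weak$^*$ with convergence of total masses. For this to hold for a general $u$, I expect a diagonal argument over polyhedral approximations to be needed. I expect this narrow convergence of the energy densities of recovery sequences to be the main technical point.

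Now define
\begin{equation*}
	\tilde\rho_\varepsilon:=\frac{\tilde m}{k}\,g_\varepsilon+(\mu_s*\varphi_{\delta})\,\chi_{\Omega},
\end{equation*}
where the mollifier $\varphi_\delta$ is suitably shifted inward near $\partial\Omega$, so that the mass of the second term is at most $|\mu_s|(\Omega)$. Letting $\delta\to0$ by a diagonal argument, we get $\mu_\varepsilon\weak*\mu$. Then
\begin{equation*}
	|\nu_\varepsilon-\mu_\varepsilon|(\Omega)\le\int_\Omega\Bigl|1-\frac{\tilde m}{k}\Bigr|\,d\nu_\varepsilon+|\mu_s|(\Omega)\longrightarrow\int_{S_u}|k-m|\,d\h^{N-1}+|\mu_s|(\Omega).
\end{equation*}
The convergence uses the narrow convergence $\nu_\varepsilon\to\lambda$ together with continuity and boundedness of $\tilde m$. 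Since also $\nu_\varepsilon(\Omega)\to k\h^{N-1}(S_u)$, this yields $\limsup F_\varepsilon\le F(u,\mu)$.
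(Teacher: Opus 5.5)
Your route is correct and genuinely different from the paper's, but one step is mis-justified. You cannot drop the $W$-term from the localized liminf of \cite{h} on the grounds that it ``is only needed for $u\in\BV(\Omega,\{\alpha,\beta\})$''. The bound on $\frac1\varepsilon\int W(u_\varepsilon)$ enters the lower bound itself: in Corollary \ref{correction_cylinderbound2} it is what gives $|Q\setminus(A_\varepsilon\cup B_\varepsilon)|\le C\varepsilon$. For the local Modica--Mortola energy the analogous claim is even false, since stretching the optimal profile makes the gradient term arbitrarily small at bounded $W$-cost.

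What is true, and what you use twice (for $\nu\ge k\h^{N-1}\llcorner S_u$ and for equidistribution along recovery sequences), is the following. If $u_\varepsilon\to u$ and $\sup_\varepsilon\frac1\varepsilon\int_A W(u_\varepsilon)\le C$, then the nonlocal part alone has liminf $\ge k\h^{N-1}(S_u\cap A)$. A black-box proof: apply the $\Gamma$-liminf of \cite{h} along $\varepsilon'=\varepsilon^{1-\eta}$, for which $|\ln\varepsilon'|=(1-\eta)|\ln\varepsilon|$ and
\[
NL_{\varepsilon'}(u_\varepsilon,A)=\varepsilon^{\eta}\,\frac1\varepsilon\int_A W(u_\varepsilon)\,dx+\frac{1}{1-\eta}\,\frac{1}{|\ln\varepsilon|}\int_A\int_A\frac{(u_\varepsilon(y)-u_\varepsilon(x))^2}{|y-x|^{N+1}}\,dy\,dx .
\]
Hence the nonlocal part has liminf $\ge(1-\eta)k\h^{N-1}(S_u\cap A)$, and you let $\eta\to0$. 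Alternatively, read it off \eqref{temp6}, which bounds $G$ alone.

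Also, no diagonal argument over polyhedral approximations is needed in the limsup. The $\Gamma$-convergence in \cite{h} gives a recovery sequence for every $u$. The narrow convergence $\nu_\varepsilon\to k\h^{N-1}\llcorner S_u$ and $\frac1\varepsilon\int W(u_\varepsilon)\to0$ then follow exactly from the argument you already wrote, so this is not the main technical point. In the density step, take $m_j\ge0$.

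Compared with the paper: there the liminf is a blow-up at $\h^{N-1}$-a.e. point of $S_u$ for the limit $\gamma$ of the full energy density. It applies Corollary \ref{correction_cylinderbound2} directly, splits into the cases $m\le k$ and $m>k$, and uses a separate density argument at $\mu_s$-a.e. point. The limsup is built by hand for polyhedral $u$. It uses piecewise constant surfactant densities chosen, as you do, proportional to the nonlocal density ($\frac{\gamma_1}{k}I_\varepsilon$), and atoms realized by $\zeta|x-x_1|^{-N}$. The pieces are glued with Lemmas \ref{cylindercomplement}, \ref{cylindercone}, \ref{recoverycube}, \ref{mixedinteraction}, followed by density steps in both $u$ and $\mu$.

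Your reformulation $F_\varepsilon=\frac1\varepsilon\int W+\nu_\varepsilon(\Omega)+|\nu_\varepsilon-\mu_\varepsilon|(\Omega)$ reduces everything to one fact: limits of $\nu_\varepsilon$ dominate $k\h^{N-1}\llcorner S_u$, with equality along recovery sequences. The liminf is then lower semicontinuity of total variation plus monotonicity of $t\mapsto t+|t-m|$. The limsup keeps $u$ fixed and needs only a continuous extension of the density and a diffuse approximation of $\mu_s$. Your approach uses \cite{h} purely as a black box, avoids all polyhedral bookkeeping, and exhibits $F(u,\mu)=\min_{\nu\ge k\h^{N-1}\llcorner S_u}\{\nu(\Omega)+|\nu-\mu|(\Omega)\}$. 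The paper's approach instead yields explicit, geometric recovery sequences.
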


\begin{rem}
    We observe that, with the aim of letting the surfactant term contribute to $F_\varepsilon$ as $\varepsilon\to0$ we need that $\rho_\varepsilon$ is of the same order as the gradient like term, which is of logarithmic scaling in $\varepsilon$. Hence in the $\Gamma$-convergence result (see Theorem \ref{thm:Gammalim}) we consider sequences $\mu_\varepsilon$ such that $\frac{\mu_\varepsilon}{|\ln \varepsilon|}$ converge in $\mathcal{M}_+(\Omega)$.
\end{rem}

\subsection{Preliminaries}
In this subsection, we collect preliminary results, mostly from \cite{h}, and
consider an energy defined on two disjoint sets related to
the nonlocal part of our energy functional $F_\varepsilon$:
we define for measurable disjoint sets $A,B \subset \R^N$  and $S \subset \R^N$
\begin{equation*}
G(A,B,S) := \int_{A \cap S} \int_{B \cap S} \frac{1}{|y-x|^{N+1}} \, dy \,dx 
\end{equation*}
and
\begin{equation*}
G(A,B) := G(A,B,\R^N) \,.
\end{equation*}
Let us now recall some results of \cite{h} that will be used later on.

\begin{lem}[ \cite{h}, Lemma 2.4, Estimate of $G$ for a cylinder and its complement inside a fattened hyperplane]
	 \label{cylindercomplement}
Given $d,l \in \R$ such that $0 \leq d<l \leq 8/3$ and the set $A_R = Q_R'(0) \times (d/2,l/2)$, it holds that
\begin{equation*}
	G \left(A_R, Q_R'(0)^C \times (-l/2,-d/2) \right) \leq C(N) R^{N-1} \left( 1 - \ln(l/2) \right) \,.
\end{equation*}
\end{lem}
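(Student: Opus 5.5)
The plan is to reduce the estimate to a one-dimensional integral in the distance to the lateral boundary $\partial Q_R'(0)$, by integrating out the vertical variables and the horizontal variable $y'$ explicitly.

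\textbf{Step 1: integrate out $y'$.} Write $x=(x',x_N)$ with $x'\in Q_R'(0)$, $x_N\in(d/2,l/2)$, and $y=(y',y_N)$ with $y'\notin Q_R'(0)$, $y_N\in(-l/2,-d/2)$. Set
\begin{equation*}
\delta(x') := \operatorname{dist}\bigl(x',\partial Q_R'(0)\bigr), \qquad t := x_N - y_N \in (d,l).
\end{equation*}
Since $|y'-x'|\ge \delta(x')$, polar coordinates in $\R^{N-1}$ give
\begin{equation*}
\int_{Q_R'(0)^C} \frac{dy'}{(|y'-x'|^2+t^2)^{\frac{N+1}{2}}}
\le C(N)\int_{\delta}^\infty \frac{r^{N-2}\,dr}{(r^2+t^2)^{\frac{N+1}{2}}}
\le \frac{C(N)}{(\delta+t)^{2}} .
\end{equation*}
The last inequality comes from splitting the radial integral at $r=t$.

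\textbf{Step 2: integrate out the vertical variables.} Set $a=x_N$ and $b=-y_N$. Extending both ranges to $(0,l/2)$ only enlarges the integral, and $t=a+b$. Then
\begin{equation*}
\int_0^{l/2}\!\!\int_0^{l/2}\frac{da\,db}{(\delta+a+b)^2}\le \int_0^{l/2}\frac{da}{\delta+a} = \ln\Bigl(1+\frac{l}{2\delta}\Bigr).
\end{equation*}
Hence
\begin{equation*}
G\le C(N)\int_{Q_R'(0)}\ln\Bigl(1+\frac{l}{2\delta(x')}\Bigr)dx'.
\end{equation*}

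\textbf{Step 3: coarea in $\delta$.} The level sets of $\delta$ in $Q_R'(0)$ have $\h^{N-2}$-measure at most $2(N-1)R^{N-2}$. Writing $\lambda=l/2\le 4/3$, this gives
\begin{equation*}
G\le C(N)R^{N-2}\int_0^{R/2}\ln\Bigl(1+\frac{\lambda}{s}\Bigr)ds
= C(N)R^{N-2}\Bigl[\tfrac R2\ln\bigl(1+\tfrac{2\lambda}{R}\bigr)+\lambda\ln\bigl(1+\tfrac{R}{2\lambda}\bigr)\Bigr].
\end{equation*}
The first term in the bracket is at most $\lambda\le 4/3$. The second is at most $\lambda\ln(\lambda+R/2)-\lambda\ln\lambda$. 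Since $\lambda$ is bounded, $-\lambda\ln\lambda\le C(1-\ln\lambda)$ and $\lambda\ln(\lambda+R/2)\le C(1+R)$.

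\textbf{Main obstacle: the regime $R\lesssim l$.} For bounded-below $R$ (say $R\ge 1$, which is the regime where the lemma is applied, with cubes of side comparable to or larger than the slab thickness), everything above is $\le C(N)R^{N-1}(1-\ln(l/2))$, as claimed. The difficulty is entirely when $R$ is small compared to $l$. There the honest size of $G$ is about $R^{N-1}\ln(l/R)$, coming from the thin column interacting with the slab just below it. So the constant has to absorb a lower bound on $R$, and I would state that assumption explicitly (or check that $C$ may depend on such a bound). Apart from this, the only slightly delicate point is the uniform bound $C/(\delta+t)^2$ in Step 1, which follows by separating the cases $r\le t$ and $r\ge t$.
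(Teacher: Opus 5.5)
Your three steps are correct. The radial bound $\int_\delta^\infty r^{N-2}(r^2+t^2)^{-(N+1)/2}\,dr\le C(N)(\delta+t)^{-2}$ holds, the vertical integration gives $\ln(1+l/(2\delta))$, and the coarea step with $\h^{N-2}(\{\delta=s\})\le 2(N-1)R^{N-2}$ checks out. The paper gives no proof of this lemma; it is quoted from \cite{h}. So there is no route to compare with, and the real issue is your caveat.

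Your caveat is right: as written, with $C=C(N)$ and no relation between $R$ and $l$, the statement is false. Take $d=0$ and $l=2$, so the right-hand side is $C(N)R^{N-1}$. Write $x=(x',a)$, $y=(y',-b)$ and $t=a+b$. The $y'$-integral over $Q_R'(0)^C$ equals the full-space value $c_N t^{-2}$ minus at most $R^{N-1}t^{-N-1}$. Hence it is at least $\tfrac{c_N}{2}t^{-2}$ once $t\ge C_N R$. For $a,b\in(0,1)$ the set $\{a+b\in dt\}$ has measure $t\,dt$ on $(0,1)$. Integrating therefore gives $G\ge \tfrac{c_N}{2}R^{N-1}\ln\frac{1}{C_NR}$, which exceeds $C(N)R^{N-1}$ as $R\to0^+$. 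So an extra hypothesis is genuinely needed; this is not an artefact of your method.

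The hypothesis you suggest, $R\ge 1$, is stronger than necessary. It would also not cover the small cubes used in the liminf proof. Your own bracket already gives the right condition. Since $\frac R2\ln(1+\frac{2\lambda}{R})\le\lambda$ and $\lambda\ln(1+\frac{R}{2\lambda})\le\frac R2$, Step 3 yields $G\le C(N)R^{N-2}(l+R)$. Hence $G\le C(N)R^{N-1}\le C(N)R^{N-1}(1-\ln(l/2))$ whenever $R\ge l$, with a constant depending only on $N$; note that $1-\ln(l/2)\ge 1-\ln(4/3)>0$.

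Judging from its error term, Corollary \ref{correction_cylinderbound2} uses the lemma with $l=r/|\ln\varepsilon|$ and $R$ fixed, so $R\ge l$ for small $\varepsilon$. In the uses in Proposition \ref{limsupPolyhedral}, $R$ is fixed as $\varepsilon\to0^+$, so a constant depending on $R$ would be harmless anyway. With the hypothesis stated as $R\ge l$ (or $R\ge c\,l$), your proof is complete. It also shows that the logarithmic factor in the lemma is superfluous in that regime.
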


\begin{lem} [\cite{h}, Remark 2.5, Estimate of $G$ for a cylinder and the complement
of a cone inside a fattened hyperplane]	\label{cylindercone}
Let  $B_\infty :=\R^{N-1} \times  (-l/2,-d/2)$ be a fattened hyperplane
and let $\conv S$  the convex envelope of a set $S \subset \R^N$. It holds for the energy of a cube $A_R:=Q_R'(0) \times (d/2,l/2)$ and the complement of a pyramid inside $B_\infty$ that
\begin{equation*}
	G\left( A_R, \left(\conv\left( Q_R'(0) \cup \left\{(-l/2,0)\right\} \right)\right)^C \cap B_\infty \right) \leq 
		C(N) R^{N-1} \left( 1 - \ln(l/2) \right) \,.
\end{equation*}
\end{lem}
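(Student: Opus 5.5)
We split the region $\left(\conv\left( Q_R'(0) \cup \{(-l/2,0)\}\right)\right)^C \cap B_\infty$ into two pieces and estimate $G$ on each separately. Here the point $(-l/2,0)$ is read as the apex $(0',-l/2)$, so the pyramid $P$ has base $Q_R'(0)\times\{0\}$ and apex at depth $l/2$. The two pieces are
\[
D_1 := Q_R'(0)^C \times (-l/2,-d/2), \qquad D_2 := \left(Q_R'(0)\times(-l/2,-d/2)\right)\setminus P .
\]
Since $G$ is additive in each set argument over disjoint sets, it suffices to bound $G(A_R,D_1)$ and $G(A_R,D_2)$ separately. The first term is exactly the quantity estimated in Lemma~\ref{cylindercomplement}, so $G(A_R,D_1)\le C(N)R^{N-1}(1-\ln(l/2))$.

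For $D_2$ we describe the slices of the pyramid. At depth $y_N=-t$ with $t\in(d/2,l/2)$, the slice of $P$ is the cube $Q'_{R(1-2t/l)}(0)$. Hence the slice of $D_2$ at depth $t$ is the shell $Q_R'(0)\setminus Q'_{R(1-2t/l)}(0)$. Its $(N-1)$-measure is
\[
R^{N-1}\left(1-(1-2t/l)^{N-1}\right)\le C(N)R^{N-1}\,\frac{t}{l}.
\]
Next we exchange the order of integration and, for fixed $y\in D_2$ with $y_N=-t$, enlarge $A_R$ to the half space $\{x_N>0\}$. Integrating first in $x'$ and then in $x_N$ gives
\[
\int_{\{x_N>0\}}\frac{dx}{|x-y|^{N+1}}\le C(N)\int_t^\infty \frac{ds}{s^{2}}=\frac{C(N)}{t}.
\]
Therefore
\[
G(A_R,D_2)\le C(N)\int_{d/2}^{l/2}\frac{1}{t}\,R^{N-1}\frac{t}{l}\,dt\le C(N)R^{N-1}.
\]
Finally, $l\le 8/3$ gives $1-\ln(l/2)\ge 1-\ln(4/3)>0$, so the constant bound $C(N)R^{N-1}$ is absorbed into $C(N)R^{N-1}(1-\ln(l/2))$ after enlarging the constant.

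The only delicate point is the $D_2$ piece. Near the lateral faces of the pyramid, points $y$ come arbitrarily close to the base $\{x_N=0\}$, and the kernel produces a $1/t$ singularity. This singularity is compensated exactly by the linear vanishing of the slice measure, of order $t/l$, as $t\to0$. If the pyramid geometry or the apex convention were different, for instance with a sharper apex, this balance is what I would recheck first. Everything else follows from Lemma~\ref{cylindercomplement} and elementary integration.
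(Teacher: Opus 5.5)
Your proof is correct. The paper gives no argument of its own for this lemma; it is quoted from \cite{h}, Remark 2.5. So there is nothing to compare step by step, and your write-up supplies the missing derivation in the natural way.

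The part of $\left(\conv\left(Q_R'(0)\cup\{(0',-l/2)\}\right)\right)^C\cap B_\infty$ lying outside $Q_R'(0)\times\R$ is exactly the set treated in Lemma \ref{cylindercomplement}. On the pyramid shell $D_2$, the half-space bound $\int_{\{x_N>0\}}|x-y|^{-N-1}\,dx=c_N/|y_N|$ is compensated by the slice measure $R^{N-1}\bigl(1-(1-2t/l)^{N-1}\bigr)\le 2(N-1)R^{N-1}t/l$. This gives $G(A_R,D_2)\le C(N)R^{N-1}$, uniformly in $d$ and $R$. A useful by-product is that the logarithmic factor comes entirely from Lemma \ref{cylindercomplement}; the shell contributes only $O(R^{N-1})$.

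Two conventions you used are not visible in the statement as quoted, and you should state them explicitly.

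\emph{Hypotheses on $d,l$.} The conditions $0\le d<l\le 8/3$ of Lemma \ref{cylindercomplement} must be in force. You need $l\le 8/3$ both to invoke that lemma and to absorb $C(N)R^{N-1}$ via $1-\ln(l/2)\ge 1-\ln(4/3)>0$. Without some such restriction the statement fails: for $l\ge 2e$ the right-hand side is nonpositive while $G>0$.

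\emph{The apex.} It must be $(0',-l/2)$, as you assumed. Read literally, with last coordinate $0$, the point lies in $\{x_N=0\}$, so the convex hull lies in that hyperplane. The region would then contain $Q_R'(0)\times(-l/2,-d/2)$, and $G$ would blow up like $\ln(1/d)$ as $d\to0^+$. No $d$-independent bound could hold under that reading.
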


\begin{cor}[\cite{h}, Corollary 2.8, Lower bound of $G$ on a special cylinder]\label{correction_cylinderbound2}
Given $r >0$, a cylinder $Q_R'(0) \times (-L/2,L/2)$, a number $\lambda > 0$, a constant $c>0$
and $A,B \subset \R^N$ two disjoint sets such that
\begin{align*}
	&\frac{|Q_R'(0) \times (-L/2,-L/2+r) \cap B|}{|Q_R'(0) \times (-L/2,-L/2+r)|} > 1 - \lambda^2 \,, \\
	&\frac{|Q_R'(0) \times (L/2-r,L/2) \cap A|}{|Q_R'(0) \times (L/2-r,L/2)|} > 1 - \lambda^2 \text{ and} \\
	&|Q_R'(0) \times (-L/2,L/2) \setminus (A \cup B)| < c \varepsilon 
		\left(1-3\lambda - \frac{6}{|\ln \varepsilon|}\right)\,,
\end{align*}
then, for $r / |\ln \varepsilon| < 8/3$, it holds that
\begin{align*}
	&G \left( A,B,Q_R'(0) \times (-L/2,L/2) \right) \\
	&\quad \geq R^{N-1}\omega_{n-1} \left(1 - 3\lambda  - \frac{6}{|\ln \varepsilon|}\right)
		\left(1 - 2\xi(N-1) \left(2\lambda +  \frac{c}{|\ln \varepsilon|} \right) -  \frac{2}{|\ln \varepsilon|}\right)\\
	&\quad\quad\left( \ln\left( \frac{r}{8 |\ln \varepsilon|}\right) -
		\ln \left( \frac{\varepsilon |\ln \varepsilon|}{2} \right)
		\right) - R^{N-1} C(N) \left( 1 - \ln\left( \frac{r}{2 |\ln \varepsilon|}\right)  \right) \,,
\end{align*}
where $\xi(N-1)$ and $C(N)$ are dimensional constants.
\end {cor}

\begin{lem}[\cite{h}, Lemma 2.10, Energy upper bound for a special sequence]\label{recoverycube}
Given a cube $Q_R'(x)$, a number $l > 0$ and the family of functions
\begin{equation*}
	u_\varepsilon: Q_R'(x) \times (-l/2,l/2) \rightarrow \R \,,\, y \mapsto \left\{
	\begin{array}{lr}
		\beta, &y_N \in \left(-\frac{l}{2}, - \frac{\varepsilon}{2 |\ln \varepsilon|} \right)\\
		\beta + (\alpha - \beta) \frac{ |\ln \varepsilon|}{\varepsilon} 
			\left( y_N + \frac{\varepsilon}{2 |\ln \varepsilon|} \right), 
		& y_N \in \left( -\frac{\varepsilon}{2 |\ln \varepsilon|},
			\frac{\varepsilon}{2 |\ln \varepsilon|} \right) \\
		\alpha, &y_N \in \left( \frac{\varepsilon}{2 |\ln \varepsilon|}, \frac{l}{2} \right)
	\end{array} \right. \,,
\end{equation*}
it holds that $u_\varepsilon$ converges in $ L^1 \left( Q_R'(x) \times (-l/2,l/2) \right)$ to $u$ as
$\varepsilon \rightarrow 0^+$, where
\begin{equation*}
	 u: Q_R'(x) \times (-l/2,l/2) \rightarrow \R \,,\, y \mapsto \left\{
		\begin{array}{lr}
		\beta, &y_N \in (-l/2,0) \\
		\alpha, &y_N \in (0,l/2)
	\end{array} \right.
\end{equation*}
and
\begin{equation*}
	\limsup_{\varepsilon \rightarrow 0^+} F_\varepsilon(u_\varepsilon, 0, Q_R'(x) \times (-l/2,l/2)) \leq
		k R^{N-1}=k\mathcal{H}^{N-1}( Q'_R(x) ) \,.
\end{equation*} 
\end{lem}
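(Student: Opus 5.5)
The statement just above is Lemma \ref{recoverycube}, the energy upper bound for the linear-profile sequence. The $L^1$ convergence is immediate. Indeed, $u_\varepsilon$ differs from $u$ only on the slab $|y_N| < \varepsilon/(2|\ln\varepsilon|)$, and there $|u_\varepsilon - u| \le \beta-\alpha$. Hence $\|u_\varepsilon-u\|_{L^1} \le (\beta-\alpha)R^{N-1}\varepsilon/|\ln\varepsilon| \to 0$.

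For the energy, the surfactant term with $\rho=0$ equals the gradient-like term. So $F_\varepsilon(u_\varepsilon,0,\cdot)$ is the potential term plus twice the double integral, and I will estimate each piece separately.

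\textbf{Potential term.} $W(u_\varepsilon)$ vanishes outside the slab and is bounded by $\max_{[\alpha,\beta]}W$ inside it. The potential term is therefore at most $C R^{N-1}/|\ln\varepsilon| \to 0$.

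\textbf{Nonlocal term: decomposition.} Write $\delta = \varepsilon/|\ln\varepsilon|$, and split the domain into $A=\{y_N>\delta/2\}$, $B=\{y_N<-\delta/2\}$ and the slab $S$. The double integral splits into three contributions:
\begin{itemize}
\item[(i)] $2(\beta-\alpha)^2 G(A,B)$, restricted to the cylinder $Q_R'(x)\times(-l/2,l/2)$;
\item[(ii)] the interactions of $S$ with $A\cup B$;
\item[(iii)] the self-interaction of $S$.
\end{itemize}

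\textbf{Main contribution (i).} Since the domain is a subset of the fattened slab, it suffices to bound the $G$ of $Q_R'\times(\delta/2,l/2)$ against the full strip $\R^{N-1}\times(-l/2,-\delta/2)$ from above. Integrating out $y'$ over $\R^{N-1}$ gives $\int_{\R^{N-1}} (|z'|^2+t^2)^{-(N+1)/2}\,dz' = c_N t^{-2}$. I expect $c_N$ to match $\omega_{n-1}$ (it must, for the constant $k$ to come out right). Then $\int\int (x_N-y_N)^{-2}$ over $x_N\in(\delta/2,l/2)$, $y_N\in(-l/2,-\delta/2)$ is $\ln(1/\delta)+O(1) = |\ln\varepsilon| + \ln|\ln\varepsilon| + O(1)$.

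Hence (i) is at most $2(\beta-\alpha)^2\omega_{n-1}R^{N-1}(|\ln\varepsilon|+o(|\ln\varepsilon|))$. The integral being symmetric, I must track the factor coming from counting both orderings $(x,y)\in A\times B$ and $B\times A$ together with the doubled surfactant term, so that after dividing by $|\ln\varepsilon|$ the total is exactly $kR^{N-1}$. Getting this constant bookkeeping to match the definition \eqref{kappa} of $k$ is the main point to check carefully. Lemma \ref{cylindercomplement} is not needed here, since extending to the whole strip only increases the quantity being bounded above.

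\textbf{Error terms (ii) and (iii).} Neither contributes at order $|\ln\varepsilon|$.
\begin{itemize}
\item For (iii), inside the slab $|u_\varepsilon(x)-u_\varepsilon(y)|\le (\beta-\alpha)|x_N-y_N|/\delta \le (\beta-\alpha)|x-y|/\delta$. The integrand is then at most $C\delta^{-2}|x-y|^{1-N}$, and integrating over $|x-y|\lesssim$ diam gives $C\delta^{-2}\cdot R^{N-1}\delta\cdot\delta$ after restricting to $y\in S$, i.e.\ $O(R^{N-1})$.
\item For (ii), take $x\in S$ and $y\in A$. The difference is at most $(\beta-\alpha)\min(1,|x-y|/\delta)$. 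Splitting $|x-y|<\delta$ from $|x-y|\ge\delta$, the inner integral is bounded by $C/\delta + C\int_\delta^\infty r^{-2}dr \le C/\delta$. Multiplied by $|S| = R^{N-1}\delta$, this gives $O(R^{N-1})$.
\end{itemize}
Both bounds are $o(|\ln\varepsilon|)$, so after dividing by $|\ln\varepsilon|$ they vanish in the limit, yielding $\limsup \le kR^{N-1}$.
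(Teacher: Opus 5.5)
\textbf{The constant bookkeeping you defer does not close, and cannot.} In (i), the factor $2$ in $2(\beta-\alpha)^2G(A,B)$ already accounts for both orderings $A\times B$ and $B\times A$. Your estimate therefore gives $\frac{1}{|\ln\varepsilon|}\int_Q\int_Q\frac{(u_\varepsilon(y)-u_\varepsilon(x))^2}{|y-x|^{N+1}}\,dy\,dx\le 2(\beta-\alpha)^2\omega_{n-1}R^{N-1}+o(1)=kR^{N-1}+o(1)$ for a \emph{single} copy of the double integral. Your guess $c_N=\omega_{n-1}$ is correct: polar coordinates and $r=\tan\theta$ give $(N-1)\omega_{n-1}\int_0^{\pi/2}\sin^{N-2}\theta\cos\theta\,d\theta=\omega_{n-1}$. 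With $\rho=0$ the surfactant term is a second, identical copy, so what your argument actually proves is $\limsup_{\varepsilon\to0^+}F_\varepsilon(u_\varepsilon,0,Q)\le 2kR^{N-1}$.

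This bound is sharp. Take your exact strip computation and subtract the complement contribution, which Lemma \ref{cylindercomplement} (applied on a subcylinder of height at most $8/3$) bounds independently of $\varepsilon$. This gives $G(A,B)\ge\omega_{n-1}R^{N-1}|\ln\varepsilon|-C$, hence $F_\varepsilon(u_\varepsilon,0,Q)\to 2kR^{N-1}$. Equivalently, Proposition \ref{liminf} with $\mu=0$ gives the interfacial density $k+|k-0|=2k$. So with this paper's localized $F_\varepsilon$, the inequality with constant $kR^{N-1}$ is false, and no bookkeeping can produce it.

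The true statement is $\limsup_{\varepsilon\to0^+}NL_\varepsilon(u_\varepsilon,Q)\le kR^{N-1}$, with $NL_\varepsilon$ from \eqref{NL_DEFINITION} localized to $Q=Q_R'(x)\times(-l/2,l/2)$, i.e.\ the potential plus one copy of the double integral. This is what \cite[Lemma 2.10]{h} asserts, and it is exactly how the lemma is used in Case 2 of Proposition \ref{limsupPolyhedral}. Your decomposition proves precisely this version once you stop doubling.

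\textbf{A second, fixable error in (iii).} Replacing $|x_N-y_N|$ by $|x-y|$ loses a logarithm at exactly the critical order. Because $S$ has lateral size $R$, $\int_S|x-y|^{1-N}\,dy$ is of order $\delta\ln(R/\delta)$ rather than $\delta$. The bound you obtain for (iii) is therefore only $O(R^{N-1}|\ln\varepsilon|)$, which does not vanish after division by $|\ln\varepsilon|$. There are two ways to fix it:
\begin{itemize}
\item Use the cap $|u_\varepsilon(x)-u_\varepsilon(y)|\le(\beta-\alpha)\min\{1,|x-y|/\delta\}$ exactly as in (ii); it is valid since $u_\varepsilon$ takes values in $[\alpha,\beta]$. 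This bounds the inner integral by $C/\delta$ and (iii) by $CR^{N-1}$.
\item Keep $(x_N-y_N)^2$ and integrate $y'$ over $\R^{N-1}$. This gives exactly $\omega_{n-1}$ for each pair $(x_N,y_N)$, hence (iii) $\le(\beta-\alpha)^2\omega_{n-1}R^{N-1}$.
\end{itemize}
The $L^1$ convergence, the potential term and (ii) are fine as written.
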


The following Lemma allows us to estimate the nonlocal part of the energy for a function
$u$ constant equal to $\alpha$ on a cylinder above the plane $\{x_N = 0\}$ and having an affine
transition from $\alpha$ to $\beta$ on an affinely deformed cylinder below this hyperplane
with the energy of a function from Lemma \ref{recoverycube}. This will be useful in the proof
of the limsup inequality on polyhedral sets below. Note that the roles of $\alpha$ and $\beta$
may be interchanged without making further changes.
\begin{lem} \label{mixedinteraction}
Given a cube $Q_R'(0)$, a number $l > 0$, and a affine function $h:\R^{N-1} \rightarrow \R$
such that for all $x' \in \overline{Q_R'(0)}$  it holds that $h(x') < 0$, we consider a function
$u: \{ x \in \R^N \,:\, x' \in Q'_R(0), h(x') < x_N < l \} \rightarrow \R$, which is constant equal to
$\alpha$ on $Q_R'(0) \times (0,l)$, equals $\beta$ on the graph of $h$ and has an affine transition
between $\alpha$ and $\beta$ on the set $\{ x\in \R^N \,:\, h(x') < x_N < 0 \}$.
Let us moreover define $\hat u: \{ x \in \R^N \,:\, x' \in Q'_R(0) \textcolor{blue}{,} \ \min_{Q_R'(0)} h < x_N < l\} \rightarrow \R$,
\begin{equation*}
	\hat u(x) := \left\{
	\begin{array}{lr}
		\alpha - (\beta - \alpha) \frac{ \min_{Q_R'(0)} h}{\max_{Q_R'(0)} h} x_N
			&x_N \in (\min_{Q_R'(0)} h,0), \\
		\alpha, &x_N \in (0, l)
	\end{array} \right. \,.
\end{equation*}
Then it holds that
\begin{align*}
	F_\varepsilon \left( u, \{ x \in \R^N \,:\, x' \in Q'_R(0), h(x') < x_N < l \}, 0 \right) \leq
		& F_\varepsilon \left( \hat u, Q_R'(0) \times \left( \min_{Q_R'(0)} h, l \right), 0 \right) \\
	&\quad + \frac{1}{\varepsilon} \int_{\{ x \in \R^N \,:\, x' \in Q'_R(0), h(x') < x_N < l \}}W(u) \,dx \,.
\end{align*}
\end{lem}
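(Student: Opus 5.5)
The plan is to reduce the statement to a comparison of the nonlocal double integrals alone. With surfactant density $0$ the third term of $F_\varepsilon$ coincides with the second, so for any function $v$ on a set $A$
\begin{equation*}
F_\varepsilon(v,A,0)=\frac1\varepsilon\int_A W(v)\,dx+\frac{2}{|\ln\varepsilon|}\int_A\int_A\frac{(v(y)-v(x))^2}{|y-x|^{N+1}}\,dy\,dx .
\end{equation*}
Write $D:=\{x: x'\in Q_R'(0),\ h(x')<x_N<l\}$, $m:=\min_{\overline{Q_R'(0)}}h$, $M:=\max_{\overline{Q_R'(0)}}h<0$, and $P:=Q_R'(0)\times(m,l)\supset D$. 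The potential term of $u$ on $D$ is kept exactly on the right-hand side, and the potential term of $\hat u$ is nonnegative, so it can be discarded. It therefore suffices to show
\begin{equation*}
\int_D\int_D\frac{(u(y)-u(x))^2}{|y-x|^{N+1}}\,dy\,dx\le\int_P\int_P\frac{(\hat u(y)-\hat u(x))^2}{|y-x|^{N+1}}\,dy\,dx .
\end{equation*}
Since $D\subset P$ and the integrand is nonnegative, the right-hand side may be restricted to $D\times D$ at no cost. The task is then a pointwise (or pairwise-averaged) comparison of the difference quotients of $u$ and $\hat u$ on $D\times D$.

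First I record the structure of $u$. On the column over $x'$, $u$ equals $\alpha$ for $x_N\ge0$ and is affine on $(h(x'),0)$ with vertical slope of size $(\beta-\alpha)/|h(x')|\le(\beta-\alpha)/|M|$. By contrast, $\hat u$ depends only on $x_N$, is affine below $0$ with vertical slope $s:=(\beta-\alpha)m/M$ (I read the normalisation in this way), and is not truncated at $\beta$. Because $m\le M<0$ gives $m/M\ge1$, $s$ dominates every column slope of $u$ after the normalisation is fixed; I would double-check this normalisation before anything else. I would then split $D\times D$ into three groups of pairs. For pairs with both points in $\{x_N\ge0\}$, both integrands vanish. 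For pairs with one point above $0$ and one below, $|u(x)-u(y)|=|u(x)-\alpha|$ with $x_N<0$, which is at most $s|x_N|\le s|x_N-y_N|=|\hat u(x)-\hat u(y)|$. For pairs with both points below $0$, I split $u(x)-u(y)$ into a vertical increment, controlled by $s|x_N-y_N|$, and a horizontal increment coming from the $x'$-dependence of $h$.

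The horizontal increment in the last group is the main obstacle. It is of size $(\beta-\alpha)|x_N||\nabla h|\,|x'-y'|/h^2$, which is not dominated pointwise by $|\hat u(x)-\hat u(y)|=s|x_N-y_N|$ when $x_N\approx y_N$. My first attempt would be to avoid pointwise comparison there and instead use a change of variables. Let $T(x',x_N):=(x',\,x_N\,m/h(x'))$ for $x_N<0$, with $T$ the identity above $0$. This map sends each column of the transition region of $u$ onto the full strip $(m,0)$ of $\hat u$ and satisfies $u=\tilde u\circ T$, where $\tilde u$ depends on $x_N$ only. I would then use a one-dimensional monotonicity/rearrangement argument in the $x_N$ variable, observing that on each pair of columns $\hat u$ is the steeper monotone profile. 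The goal is to show that the integral over pairs in the transition region of the full quotient of $u$ is bounded by the integral of the purely vertical quotient of $\hat u$, which has the larger slope and lives on the larger strip. The gain from the steeper slope and the larger region should absorb the horizontal contribution and the Jacobian $m/h(x')\ge1$.

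If that route fails, the fallback is to estimate the horizontal contribution separately by $C|\nabla h|^2R^{N-1}$ times an $\varepsilon$-independent factor, and to absorb it into the slack of $\hat u$ on $P\setminus D$. That slack comes from the interaction of $\{m<x_N<h(x')\}$ with $\{x_N>0\}$, which is present for $\hat u$ but absent for $u$.
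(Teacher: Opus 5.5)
You have found the weak point, and with it a flaw in the paper's own argument. The paper's proof consists of the claim that $|u(y)-u(x)|\le|\hat u(y)-\hat u(x)|$ for all pairs of points. This is false for two points of the transition layer at the same height: there $\hat u(x)=\hat u(y)$, but $u(x)\neq u(y)$ as soon as $h$ is not constant.

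Your treatment of the other pairs is correct once $\hat u$ has slope $(\beta-\alpha)/|\max h|$, i.e.\ $\hat u=\alpha+(\beta-\alpha)x_N/\max h$ on $(\min h,0)$. The literal slope $(\beta-\alpha)\min h/\max h$ cannot be meant: for $h\equiv-a$ with $a<1$ it gives $u-\alpha=(\hat u-\alpha)/a$ on $D=P$, which already breaks your reduced inequality.

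Your proposal does not close the gap, though. The map $T$ followed by an unspecified one-dimensional rearrangement, and the fallback of absorbing the horizontal part into the slack on $P\setminus D$, are hoped-for estimates, not arguments. Neither can be completed, because the inequality is false when $h$ is steep compared with $R$.

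Write $E(v;A):=\int_A\int_A\frac{(v(y)-v(x))^2}{|y-x|^{N+1}}\,dy\,dx$. Take $h(x')=-a+\frac aR x_1$, so that $-\frac{3a}{2}<h<-\frac a2$ on $\overline{Q_R'(0)}$, with $R\ll a$.

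On $S:=Q_R'(0)\times(-\frac a2,-\frac a4)$ one has $\partial_1u\ge\frac{\beta-\alpha}{9R}$, $\partial_ju=0$ for $2\le j\le N-1$, and $|\partial_Nu|\le\frac{2(\beta-\alpha)}{a}$. Restricting to pairs in $S$ with $|x_j-y_j|\le|x_1-y_1|$ for all $j$ gives the first bound below.

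The function $\hat u$ depends on $x_N$ only, with slope $\frac{2(\beta-\alpha)}{a}$ and oscillation $3(\beta-\alpha)$. Integrating out the horizontal variables, with effective kernel at most $\min\{CR^{N-1}z^{-2},R^{2N-2}|z|^{-N-1}\}$, gives the other two bounds:
\begin{equation*}
E(u;D)\ge c(\beta-\alpha)^2aR^{N-2},\qquad E(\hat u;P)\le C(\beta-\alpha)^2\frac{R^N}{a}\Bigl(1+\ln\frac aR\Bigr),\qquad \frac1\varepsilon\int_PW(\hat u)\,dx\le C\frac{R^{N-1}a}{\varepsilon}.
\end{equation*}
Hence your reduced double-integral inequality fails as soon as $R/a$ is small. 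After cancelling $\frac1\varepsilon\int_DW(u)$ from both sides, the lemma itself fails if moreover $R\ll\varepsilon/|\ln\varepsilon|$. The horizontal contribution, of order $aR^{N-2}$, exceeds the whole nonlocal energy of $\hat u$ on $P$, which is of order $R^N/a$ up to a logarithm. So no rearrangement or slack argument can rescue the statement as written.

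At best one can hope for a version with a constant depending on $\|\nabla h\|_\infty$. Such a bound would keep $|\nabla_{x'}u|\le\|\nabla h\|_\infty|\partial_Nu|$ under control.

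For its use in Proposition~\ref{limsupPolyhedral} the lemma does not seem needed. By construction $u_\varepsilon$ is bounded and $C|\ln\varepsilon|/\varepsilon$-Lipschitz there. Splitting the inner integral at $|y-x|=\varepsilon/|\ln\varepsilon|$ bounds it by $C|\ln\varepsilon|/\varepsilon$. Integrating over the set of measure $C\varepsilon^2/|\ln\varepsilon|^2$ where $v_\varepsilon$ is non-constant, and dividing by $|\ln\varepsilon|$, gives $O(\varepsilon/|\ln\varepsilon|^2)$.
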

\begin{proof}
We extend the function $u$ linearly to the set  $Q_R'(0) \times \left( \min_{Q_R'(0)} h, l \right)$. Then it holds
$F_\varepsilon \left( u, \{ x \in \R^N \,:\, x' \in Q'_R(0), h(x') < x_N < l \}, 0 \right) \leq
F_\varepsilon \left( u, Q_R'(0) \times \left( \min_{Q_R'(0)} h, l \right), 0 \right)$.
Since for any two fixed points $x,y \in
\{ x \in \R^N \,:\, x' \in Q'_R(0), h(x') < x_N < l \}$ it holds$|u(y) -u(x)| \leq |\hat u(y) - \hat u(x)|$, it follows
\begin{align*}
	F_\varepsilon \left( u, Q_R'(0) \times \left( \min_{Q_R'(0)} h, l \right) \right) &
		\leq F_\varepsilon \left( \hat u, Q_R'(0) \times \left( \min_{Q_R'(0)} h, l \right) \right) \\
	&\quad + \frac{1}{\varepsilon} \int_{\{ x \in \R^N \,:\, x' \in Q'_R(0), h(x') < x_N < l \}}W(u) \,dx \,.
\end{align*}
\end{proof}

\subsection{$\Gamma$-liminf inequality}\label{subsec:liminf}
\begin{prop}[$\Gamma$-liminf inequality] \label{liminf}
Given sequences $(\varepsilon_h)_h$, $(\rho_{\varepsilon_h})_h$ and $(u_{\varepsilon_h})_h$
such that $\varepsilon_h \in \R$ satisfy
$\varepsilon_h \rightarrow 0^+$, $\frac{\rho_{\varepsilon_h}}{|\ln \varepsilon_h|}$
converge weak$\ast$ to a measure $\mu \in \mathcal{M}_+(\Omega)$ and $u_{\varepsilon_h}$ converge to  a 
function $u$ in $L^1(\Omega)$ as $h \rightarrow +\infty$, it holds that
\begin{equation*}
	\liminf_{h \rightarrow \infty} F_{\varepsilon_h}(u_{\varepsilon_h}, \rho_{\varepsilon_h}) \geq F(u,\mu) \,.
\end{equation*}
\end{prop}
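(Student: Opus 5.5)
We assume without loss of generality that $\liminf_h F_{\varepsilon_h}(u_{\varepsilon_h},\rho_{\varepsilon_h})<\infty$, and we pass to a subsequence realizing the liminf. By Theorem \ref{compactness} we then have $u\in\BV(\Omega,\{\alpha,\beta\})$. We introduce the nonnegative functions
\begin{equation*}
	g_h(x):=\frac{1}{|\ln\varepsilon_h|}\int_\Omega\frac{(u_{\varepsilon_h}(y)-u_{\varepsilon_h}(x))^2}{|y-x|^{N+1}}\,dy ,\qquad r_h:=\frac{\rho_{\varepsilon_h}}{|\ln\varepsilon_h|},
\end{equation*}
so that $F_{\varepsilon_h}\ge\int_\Omega g_h\,dx+\int_\Omega|g_h-r_h|\,dx$. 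The measures $\nu_h:=g_h\mathcal L^N$ have bounded mass, so up to a further subsequence $\nu_h\weak*\nu$ and $|g_h-r_h|\mathcal L^N\weak*\sigma$. By lower semicontinuity of the total variation on open sets we get $\sigma\ge|\mu-\nu|$ as measures. Hence
\begin{equation*}
	\liminf_h F_{\varepsilon_h}(u_{\varepsilon_h},\rho_{\varepsilon_h})\ge \nu(\Omega)+|\mu-\nu|(\Omega).
\end{equation*}

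The key step is to identify a lower bound for $\nu$ on the interface, namely $\nu\ge k\,\h^{N-1}\llcorner S_u$. This is the localized $\Gamma$-liminf for $NL_\varepsilon$, applied with the double integral counting each pair twice, once from $x\in A$ and once from $x\in B$. We will prove it by a blow-up (Fonseca--Müller) argument. Write $\nu=\theta\,\h^{N-1}\llcorner S_u+\nu^\perp$. At $\h^{N-1}$-a.e. $x_0\in S_u$ we have $\theta(x_0)=\lim_{R\to0}\nu(Q_R(x_0))/R^{N-1}$, taking cubes oriented along the normal. After rescaling, $u_{\varepsilon_h}$ is close in $L^1$ to $\beta$ on one side and $\alpha$ on the other. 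We choose $R=R_h\to0$ slowly, and near the flat part we take $A$ to be roughly $\{u_{\varepsilon_h}\approx\alpha\}$ and $B$ to be $\{u_{\varepsilon_h}\approx\beta\}$. The potential term forces the transition set to have measure $O(\varepsilon_h)$. Corollary \ref{correction_cylinderbound2} then bounds $G(A,B,Q_R)$ from below by roughly $R^{N-1}\omega_{n-1}|\ln\varepsilon_h|(1-O(\lambda))$, up to error terms of order $R^{N-1}(1-\ln(r/|\ln\varepsilon_h|))$, which vanish after division by $|\ln\varepsilon_h|$. Counting both orderings of the pairs and the factor $(\beta-\alpha)^2$ from the pairs $x\in A$, $y\in B$ gives $\nu(Q_R(x_0))\ge kR^{N-1}(1-o(1))$, so $\theta\ge k$. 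I expect this step to be the main obstacle. The estimate has to be done for the measure $\nu$, i.e. the integral over $x\in Q_R$ with $y$ ranging over $\Omega$; this is fine because restricting $y$ only lowers the energy. The real difficulty is the careful diagonal choice of $R_h,r,\lambda$ mirroring \cite[Prop.~2.12ff]{h}, together with handling the boundary of the cube via Lemmas \ref{cylindercomplement} and \ref{cylindercone}.

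Finally we minimize pointwise. Decompose $\mu=\mu_a+\mu_s$ with $\mu_a=m\,\h^{N-1}\llcorner S_u$. Then
\begin{equation*}
	\nu(\Omega)+|\mu-\nu|(\Omega)\ge\int_{S_u}\bigl(\theta+|m-\theta|\bigr)\,d\h^{N-1}+\nu^\perp(\Omega)+|\mu_s-\nu^\perp|(\Omega),
\end{equation*}
using mutual singularity to split the total variation. The map $t\mapsto t+|m-t|$ is nondecreasing, so for $\theta\ge k$ we have $\theta+|m-\theta|\ge k+|m-k|$. Moreover $\nu^\perp(\Omega)+|\mu_s-\nu^\perp|(\Omega)\ge|\mu_s|(\Omega)$ by the triangle inequality. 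Together these give $\liminf_h F_{\varepsilon_h}\ge F(u,\mu)$.
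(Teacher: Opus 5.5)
Your argument is correct. Its core is the same as the paper's: a blow-up at $\h^{N-1}$-a.e. point of $S_u$, where Corollary \ref{correction_cylinderbound2} is applied to $A_\varepsilon=\{u_\varepsilon<\alpha+\delta\}$ and $B_\varepsilon=\{u_\varepsilon>\beta-\delta\}$, with the transition set controlled by the potential term.

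Where you differ is in how the surfactant term is handled. The paper blows up the weak$\ast$ limit $\gamma$ of the \emph{full} energy density and treats two regimes separately, writing $m:=\frac{d\mu_a}{d\h^{N-1}\llcorner S_u}$.
If $m\le k$, it uses $|I_\varepsilon-\rho_\varepsilon|\ge I_\varepsilon-\rho_\varepsilon$ on the cube and obtains density $2k-m$.
If $m>k$, it uses $|I_\varepsilon-\rho_\varepsilon|\ge \rho_\varepsilon-I_\varepsilon$ and obtains $m$.
It then runs a second blow-up at $\mu_s$-a.e. point to show $\frac{d\gamma_s}{d\mu_s}\ge 1$.

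You instead keep the nonlocal part $\nu$ and the mismatch part $\sigma$ apart. You get $\sigma\ge|\mu-\nu|$ softly from weak$\ast$ lower semicontinuity. The only hard estimate is then the localized $NL_\varepsilon$ bound $\nu\ge k\,\h^{N-1}\llcorner S_u$, which is exactly the content of \cite{h}. You conclude with soft measure theory: splitting $|\mu-\nu|$ along mutually singular parts, the monotonicity of $t\mapsto t+|m-t|$, and the triangle inequality on the singular parts.

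Your route is more modular. It needs no case distinction and no separate blow-up for $\mu_s$, and it shows why the limit density is $\min_{t\ge k}\bigl(t+|m-t|\bigr)=k+|k-m|$. The paper's route is more hands-on and reads everything off a single limit measure.

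One simplification for your key step: the diagonal choice $R_h\to 0$ is unnecessary. As in the paper, fix a small radius $R$ with $\nu(\partial Q_R)=0$, so that $\nu(Q_R)=\lim_h \nu_h(Q_R)$, and such that $u$ is $\lambda^2$-close to the step function on $Q_R$. Apply Corollary \ref{correction_cylinderbound2} with $L=R$, $r=R/2$ and let $h\to\infty$. For fixed $R$ the error term $R^{N-1}C(N)\bigl(1-\ln(r/(2|\ln\varepsilon_h|))\bigr)/|\ln\varepsilon_h|$ already tends to $0$, and the main logarithmic factor divided by $|\ln\varepsilon_h|$ tends to $1$. Then let $R\to 0$, and finally $\lambda,\delta\to 0$.
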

\begin{proof}
Without loss of generality we may assume that
$\liminf_{h \rightarrow \infty} F_{\varepsilon_h}(u_{\varepsilon_h},\rho_{\varepsilon_h}) < \infty$
and therefore by
Theorem \ref{compactness} $u \in \BV \left( \Omega, \{ \alpha,  \beta \} \right)$.
In order to simplify the notation, we write $\varepsilon$ instead of $\varepsilon_h$ and do not relabel
subsequences. 
Up to extracting a subsequence, we may assume
\begin{equation*}
	\liminf_{\varepsilon \rightarrow 0^+} F_\varepsilon(u_\varepsilon,\rho_\varepsilon)
		= 	\lim_{\varepsilon \rightarrow 0^+} F_\varepsilon(u_\varepsilon,\rho_\varepsilon) \,.
\end{equation*}

Let us set
\begin{equation*}
	g_\varepsilon(x) := \frac{1}{\varepsilon} W(u_\varepsilon)(x) +
		\frac{1}{|\ln \varepsilon|} \int_\Omega \frac{(u_\varepsilon(y) - u_\varepsilon(x))^2}
		{|y - x|^{N+1}} \, dy + \frac{1}{|\ln \varepsilon|} \left|
		\int_\Omega \frac{(u_\varepsilon(y) - u_\varepsilon(x))^2}{|y - x|^{N+1}} \,dy
		 - \rho_\varepsilon(x) \right|\,.
\end{equation*}
Then there exists a Radon measure $\gamma$ on $\Omega$ such that, again up to subsequences,
\begin{equation*}
	g_\varepsilon \mathcal{L}^N \weak* \gamma \,.
\end{equation*}
By the lower semicontinuity of the total variation with respect to weak$\ast$ convergence we get
\begin{equation*}
	\lim_{\varepsilon \rightarrow 0^+} F_\varepsilon(u_\varepsilon, \rho_\varepsilon)
		= \lim_{\varepsilon \rightarrow 0^+} \int_\Omega g_\varepsilon(x) \,dx \geq \int_\Omega \,d\gamma
		= \int_\Omega \frac{d \gamma}{d \h^{N-1} \llcorner S_u} \,d \h^{N-1}
		\llcorner S_u + \int_\Omega \,d \gamma_s,.
\end{equation*}
Hence, it is enough to show that
\begin{equation*}
	\int_\Omega \frac{d \gamma}{d \h^{N-1} \llcorner S_u} \,d \h^{N-1}\llcorner S_u 
		+ \int_\Omega \,d \gamma_s
		\geq \int_\Omega k + \left| k- \frac{d \mu_a}{d \h^{N-1} \llcorner S_u} \right| \,d\h^{N-1} \llcorner S_u
		+ \int_\Omega 1 \,d\mu_s \,.
\end{equation*}
This means that it suffices to prove the following two inequalities 
\begin{equation} \label{1)}
	\frac{d \, \gamma}{d \h^{N-1} \llcorner S_u}(x) \geq  k + \left| k- \frac{d \mu_a}{d \h^{N-1}
		\llcorner S_u} \right| \quad \text{for } \h^{N-1} \text{-a.e. } x \in S_u \,,
\end{equation}
\begin{equation} \label{2)}
	\frac{d \gamma_s}{d \mu_s} \geq 1  \quad \text{for } \mu_s \text{-a.e. } x \in \Omega\,,
\end{equation}
which imply our assertion. Let us use the ad-hoc notation $Q_R^\nu(x)$ for a cube obtained by rotating
$Q_R(x)$ in such a way that it has a face orthogonal to the vector $\nu \in \R^N$,
$Q_R^{\nu +}(x) := \left\{ y \in Q_R^\nu(x) \,:\, \nu \cdot (y-x) > 0 \right\}$
and $Q_R^{\nu -}(x) := \left\{ y \in Q_R^\nu(x) \,:\, \nu \cdot (y-x) < 0 \right\}$.
For $\h^{N-1}$-almost every $x \in S_u$ it holds that (see e.g. \cite[Theorems 2.22 and 3.59]{afp})
\begin{align*}
	&\text{it exists the measure theoretic outer normal } \nu = \nu_{S_u}(x) \,, \\
	&	\begin{array}{ll} 	\displaystyle
		\frac{1}{|Q_R^\nu (x)|}\int_{Q_R^\nu(x)} u - \left(\alpha \chi_{Q_R^{\nu +}(x)} + \beta
		 	\chi_{Q_R^{\nu -}(x)} \right) \,dy \rightarrow 0 & \text{as} \quad R \rightarrow 0^+ \,,\\[10pt]
            \displaystyle
		\frac{\h^{N-1} \llcorner S_u (Q_R^ \nu(x))}{R^{N-1}}  \rightarrow 1 & \text{as} 
			\quad R \rightarrow 0^+ \,,\\[10pt]
            \displaystyle
		\frac{\gamma \left( Q_R^ \nu(x) \right)} {\h^{N-1} \llcorner S_u (Q_R^ \nu(x))}
 			\rightarrow \frac{d \, \gamma}{d\h^{N-1} \llcorner S_u}(x) & \text{as} 
			\quad R \rightarrow 0^+, \\[10pt]
            \displaystyle
		\frac{\mu \left( Q_R^ \nu(x) \right)} {\h^{N-1} \llcorner S_u (Q_R^ \nu(x))}
 			\rightarrow \frac{d \, \mu}{d\h^{N-1} \llcorner S_u}(x) & \text{as} 
			\quad R \rightarrow 0^+ \,.
	\end{array}
\end{align*}
Moreover, there exists an at most countable set $\mathbf{B}(x)$
(the set of radii for which $\mu(\partial Q_R^\nu (x)) \not= 0$ or $\gamma(\partial Q_R^\nu (x)) \not= 0$)
such that
\begin{equation*}
	\begin{array}{lr}
    \displaystyle
		\lim_{\varepsilon \rightarrow 0^+} F_\varepsilon(u_\varepsilon, \rho_\varepsilon, Q_R^ \nu(x)) = 
			\lim_{\varepsilon \rightarrow 0^+}\int_{Q_R^ \nu(x)} g_\varepsilon \,dy =
			\gamma(Q_R^ \nu(x)) & \text{if } R \not\in \mathbf{B}(x) \text{ is sufficiently small}
	\end{array}
\end{equation*}
and
\begin{equation*}
	\begin{array}{lr}
    \displaystyle
		\lim_{\varepsilon \rightarrow 0^+} \frac{1}{|\ln \varepsilon|} 
			\int_{Q_R^ \nu(x)} \rho_\varepsilon \,dy =
			\mu(Q_R^ \nu(x)) & \text{if } R \not\in \mathbf{B}(x) \text{ is sufficiently small} \,.
	\end{array}
\end{equation*}
Therefore, given any constant $\lambda > 0$, for $\h^{N-1}$-a.e. $x \in S_u$
there exists a cube $Q_R^\nu (x) \subset \Omega$ such that
\begin{align} \label{temp5}
	&\frac{d \, \gamma}{d \h^{N-1} \llcorner S_u}(x) + \lambda \geq \frac{1}{R^{N-1}}
		\lim_{\varepsilon \rightarrow 0^+} F_\varepsilon(u_\varepsilon, Q_R^ \nu(x)) \,, \\
	& |Q_R^{\nu +}(x) \, \Delta \, \left(\{ u = \alpha\}  \cap Q_R^{\nu +}(x)\right) |
		< \frac{\lambda^2}{4} R^N \,, \nonumber\\
	& |Q_R^{\nu -}(x) \, \Delta \, \left(\{ u = \beta \}  \cap Q_R^{\nu -}(x)\right)| < \frac{\lambda^2}{4} R^N
		 \nonumber \\
	&\left| \frac{d \, \mu}{d \h^{N-1} \llcorner S_u}(x) - \frac{1}{R^{N-1}}
		\lim_{\varepsilon \rightarrow 0^+} \int_{Q^\nu_R(x)} \rho_\varepsilon\,dy \right| < \lambda \nonumber \,,
\end{align}
and therefore, by the $L^1$ convergence of $u_\varepsilon$ to $u$, for all $\varepsilon$ sufficiently small it holds
\begin{align*}
	& |Q_R^{\nu +}(x) \cap A_\varepsilon| > (1 - \lambda^2) \frac{R^N}{2} \,, \\
	& |Q_R^{\nu -}(x) \cap B_\varepsilon| > (1 - \lambda^2) \frac{R^N}{2} \,.
\end{align*}
where, given a fixed $\delta \in \left( 0, (\beta - \alpha)/2 \right)$, $A_\varepsilon$ and $B_\varepsilon$ are defined as
\begin{align*}
	&A_\varepsilon := \{ x \,:\, u_\varepsilon(x) < \alpha + \delta \} \\
	&B_\varepsilon  := \{ x \,:\, u_\varepsilon(x) > \beta - \delta \} \,.
\end{align*}
Since $F_\varepsilon(u_\varepsilon, \rho_\varepsilon)$ is uniformly bounded in $\varepsilon$, we have
for a positive constant $C$ that $|Q_R^\nu(x) \setminus (A_\varepsilon \cup B_\varepsilon)| 
< C\varepsilon$ and in particular there exists a constant $c > 0$
such that
\begin{equation}
\label{temp5.1}
	|Q_R^\nu(x) \setminus (A_\varepsilon \cup B_\varepsilon)| <
		c \varepsilon \left(1-3\lambda - \frac{6}{|\ln \varepsilon|}\right).
\end{equation} 
By the invariance of the energy $F_\varepsilon$ under rotation and translation of the domain,
from now on we may assume that $\nu = e_N$ and $x = 0$, and by (\ref{temp5.1})
we can now apply Corollary \ref{correction_cylinderbound2} for $L := R$ and $r= R/2$. It implies
\begin{align} \label{temp6}
	&\frac{1}{|\ln \varepsilon|} G \left( A,B,Q_R(0) \right) \nonumber\\
	&\quad \geq \frac{1}{|\ln \varepsilon|} R^{N-1}\omega_{n-1}
		\left(1 - 3\lambda  - \frac{6}{|\ln \varepsilon|}\right)
		\left(1 - 2\xi(N-1) \left(2\lambda +  \frac{c}{|\ln \varepsilon|} \right) -  \frac{2}{|\ln \varepsilon|}\right)
		\nonumber \\
	&\quad\quad\left( \ln\left( \frac{r}{8 |\ln \varepsilon|}\right) -
		\ln \left( \frac{\varepsilon |\ln \varepsilon|}{2} \right)
		\right) - \frac{1}{|\ln \varepsilon|} R^{N-1} C(N) 
		\left( 1 - \ln\left( \frac{r}{2 |\ln \varepsilon|}\right)  \right) \nonumber\\
	&\quad \longrightarrow R^{N-1}\omega_{n-1} (1 - 3\lambda)
		(1 - 4\xi(N-1)\lambda) \quad \text{as } \varepsilon \rightarrow 0^+ \,.
\end{align}
In order to shorten the notation, we set
\begin{equation*}
	I_\varepsilon(x) := \int_\Omega \frac{|u_\varepsilon(y) - u_\varepsilon(x)|^2}{|y-x|^{N+1}} \,dy\,.
\end{equation*}
We next distinguish the two cases \begin{equation}
\label{first_case}
    k- \frac{d \mu_a}{d \h^{N-1}	\llcorner S_u}(0) \geq 0 
\end{equation}
 or\begin{equation}
    k- \frac{d \mu_a}{d \h^{N-1}	\llcorner S_u}(0)< 0.  
 \label{second_case}\end{equation} 
 In the first case, so when \eqref{first_case} holds,
by (\ref{temp5}) and (\ref{temp6}) we get
\begin{align*}
	\frac{d \gamma}{d \h^{N-1} \llcorner S_u}&(0) + \lambda \geq \frac{1}{R^{N-1}}
		\lim_{\varepsilon \rightarrow 0^+} F_\varepsilon(u_\varepsilon,\rho_\varepsilon, Q_R(0)) \\
	&\geq \frac{1}{R^{N-1}} \liminf_{\varepsilon \rightarrow 0^+} \, \frac{1}{|\ln \varepsilon|} \Bigg(	
		\int_{Q_R(0)} I_\varepsilon(x) \,dx + \int_{Q_R(0)} |I_\varepsilon(x) - \rho_\varepsilon(x)| \,dx \Bigg) \\
	&\geq \frac{1}{R^{N-1}} \liminf_{\varepsilon \rightarrow 0^+} \, \frac{1}{|\ln \varepsilon|} \Bigg(	
		\int_{Q_R(0)} I_\varepsilon(x) \,dx + \int_{Q_R(0)} (I_\varepsilon(x) - \rho_\varepsilon(x)) \,dx \Bigg) \\		
	&\geq \frac{1}{R^{N-1}} \liminf_{\varepsilon \rightarrow 0^+} \, \frac{1}{|\ln \varepsilon|} \Bigg(
		2 \int_{A_\varepsilon \cap Q_R(0)} \int_{B_\varepsilon \cap Q_R(0)}
		\frac{|u_\varepsilon(y) - u_\varepsilon(x)|^2}{|y-x|^{N+1}} \,dy \,dx \\
	&+ 2 \int_{B_\varepsilon \cap Q_R(0)} \int_{A_\varepsilon \cap Q_R(0)}
		\frac{|u_\varepsilon(y) - u_\varepsilon(x)|^2}{|y-x|^{N+1}} \,dy \,dx
		- \int_{Q_R(0)} \rho_\varepsilon(x) \,dx \Bigg) \\
	&\geq \frac{1}{R^{N-1}} \liminf_{\varepsilon \rightarrow 0^+}  \frac{1}{|\ln \varepsilon|}
		4 (\beta - \alpha - 2 \delta)^2 G (A_\varepsilon, B_\varepsilon, Q_R(0)) \\
	&\quad - \frac{1}{R^{N-1}}\limsup_{\varepsilon \rightarrow 0^+}
		\frac{1}{|\ln \varepsilon|}\int_{Q_R(0)} \rho_\varepsilon(x) \,dx \\
	&\geq 4 (\beta - \alpha - 2 \delta)^2 \omega_{n-1} (1 - 3\lambda) (1 - 4\xi(N-1)\lambda)
		- \frac{d \, \mu}{d \h^{N-1} \llcorner S_u}(0) -\lambda \\
	&=  2 k  \frac{(\beta - \alpha - 2 \delta)^2}{(\beta - \alpha)^2} (1 - 3\lambda) (1 - 4\xi(N-1)\lambda)
		- \frac{d \, \mu}{d \h^{N-1} \llcorner S_u}(0) -\lambda
\end{align*}
Since $\lambda$ and $\delta$ are arbitrary, this implies (\ref{1)}) for the case (\ref{first_case}).

In the second case, so when \eqref{second_case} holds, we have that 
(\ref{1)}) is equivalent to
\begin{equation*}
	 \frac{d \gamma_a}{d \h^{N-1} \llcorner S_u}(0) \geq \frac{d \mu_a}{d \h^{N-1} \llcorner S_u}(0)\,.
\end{equation*}
By (\ref{temp5}) we get
\begin{align*}
	\frac{d \, \gamma}{d \h^{N-1} \llcorner S_u}&(0) + \lambda \geq \frac{1}{R^{N-1}}
		\lim_{\varepsilon \rightarrow 0^+} F_\varepsilon(u_\varepsilon,\rho_\varepsilon, Q_R(0)) \\
	&\geq \frac{1}{R^{N-1}} \liminf_{\varepsilon \rightarrow 0^+} \, \frac{1}{|\ln \varepsilon|} \Bigg(	
		\int_{Q_R(0)} I_\varepsilon(x) \,dx + \int_{Q_R(0)} |I_\varepsilon(x) - \rho_\varepsilon(x)| \,dx \Bigg) \\
	&\geq \frac{1}{R^{N-1}} \liminf_{\varepsilon \rightarrow 0^+} \, \frac{1}{|\ln \varepsilon|} \Bigg(	
		\int_{Q_R(0)} I_\varepsilon(x) \,dx + \int_{Q_R(0)} (\rho_\varepsilon(x) - I_\varepsilon(x) ) \,dx \Bigg) \\		
	&\geq \frac{1}{R^{N-1}} \liminf_{\varepsilon \rightarrow 0^+} \, \frac{1}{|\ln \varepsilon|}
		\int_{Q_R(0)} \rho_\varepsilon(x) \,dx \\
	&\geq  \frac{d \mu}{d \h^{N-1} \llcorner S_u}(0) -\lambda \,.
\end{align*}
Since $\lambda$ is arbitrary, this implies the claim in the case (\ref{second_case}).
We are left to show (\ref{2)}). For $\mu_s$-a.e. $x_0 \in \text{supp}(\mu_s)$ it holds that
\begin{equation*}
	\frac{d \gamma_s}{d \mu_s}(x_0) = \lim_{R \rightarrow 0^+} \frac{\gamma_s(Q_R(x_0))}
		{\mu_s(Q_R(x_0))}. 
\end{equation*}

Moreover, there exists an at most countable set $\mathbf{B}(x)$
(the set of radii for which $\mu(\partial Q_R(x_0)) \not= 0$ or $\gamma(\partial Q_R(x_0)) \not= 0$)
such that
\begin{equation*}
	\begin{array}{lr}
    \displaystyle
		\lim_{\varepsilon \rightarrow 0^+} F_\varepsilon(u_\varepsilon, \rho_\varepsilon, Q_R(x_0)) = 
			\lim_{\varepsilon \rightarrow 0^+}\int_{Q_R(x_0)} g_\varepsilon \,dy =
			\gamma(Q_R(x_0)) & \text{if } R \not\in \mathbf{B}(x) \text{ is sufficiently small} \,.
	\end{array}
\end{equation*}
and
\begin{equation*}
	\begin{array}{lr}
    \displaystyle
		\lim_{\varepsilon \rightarrow 0^+} \frac{1}{|\ln \varepsilon|}\int_{Q_R(x_0)} \rho_\varepsilon \,dy =
			\mu(Q_R(x_0)) & \text{if } R \not\in \mathbf{B}(x) \text{ is sufficiently small} \,.
	\end{array}
\end{equation*}

Therefore, given any $\lambda > 0$, there exists $R>0$ such that
\begin{equation*}
	\frac{d \gamma_s}{d \mu_s}(x_0) \geq \frac{\gamma_s(Q_R(x_0))}{\mu_s(Q_R(x_0))} - \lambda
\end{equation*}
and we get
\begin{align*}
\displaystyle
	\frac{d \gamma_s}{d \mu_s}(x_0) & \geq \frac{\displaystyle\lim_{\varepsilon \rightarrow 0^+}
		 F_\varepsilon(u_\varepsilon,  \rho_\varepsilon, Q_R(x_0))}{\displaystyle\lim_{\varepsilon \rightarrow 0^+}
		 \frac{1}{|\ln \varepsilon|} \int_{Q_R(x_0)} \rho_\varepsilon \,dy} - \lambda \\
	&= \lim_{\varepsilon \rightarrow 0^+} \frac{ F_\varepsilon(u_\varepsilon, \rho_\varepsilon,  Q_R(x_0))}
		{\displaystyle\frac{1}{|\ln \varepsilon|} \int_{Q_R(x_0)} \rho_\varepsilon \,dy} - \lambda \\
	&\geq \lim_{\varepsilon \rightarrow 0^+}  \displaystyle \frac{\displaystyle\frac{1}{|\ln \varepsilon|} \Big(	
		\int_{Q_R(x_0)} I_\varepsilon(x) \,dx + \int_{Q_R(x_0)} (\rho_\varepsilon(x) 
		- I_\varepsilon(x) ) \,dx \Big)}
		{\displaystyle\frac{1}{|\ln \varepsilon|}\displaystyle \int_{Q_R(x_0)} \rho_\varepsilon \,dy} - \lambda \\
	&\geq 1 - \lambda \,.
\end{align*}
By the arbitrariness of $\lambda$, this shows (\ref{2)}) and concludes the proof.
\end{proof}

\subsection{$\Gamma$-limsup inequality}\label{subsec:limsup}
We now show the limsup inequality, starting with the special case of polyhedral limit functions
and limit measures supported on polyhedral sets and a union of finitely many points.
For this we need the following
\begin{df} \label{D5.1}
\emph{Polyhedral sets} $P \subset  \R^N$ are defined as open sets with a Lipschitz boundary
that is contained in the union of finitely many affine hyperplanes in $\R^N$.
The intersection of any such hyperplane with the boundary of $P$ is called a \emph{face} of $P$. 
A \emph{polyhedral set in $\Omega$} is given as the intersection of a polyhedral 
set in $\R^N$ with $\Omega$.
Given $\alpha, \beta \in \R$, a function $u \in \BV(\Omega, \{\alpha,\beta\})$ is said to be a
\emph{polyhedral function}
corresponding to a polyhedral set $A \subset \R^N$, if $\h^{N-1} (\partial A \cap \partial \Omega) = 0$
and $\{u = \beta\} = \Omega \cap A$ (and $\{u = \alpha\} = \Omega \setminus A$).
A subset $\Sigma$ of an affine hyperplane $H$ in $\R^N$ that is a polyhedral set in $\R^{N-1}$ 
is said to be a \emph{$(N-1)$-dimensional polyhedral set} in $\R^N$.
Let $E\subset \R^N$ and let $\Sigma$ be a $(N-1)$-dimensional polyhedral set in $\R^N$. 
Denoting by $\nu_\Sigma$ the normal to the hyperplane $H$, we define the \emph{projection}
of $\Sigma$ to $E$ as
the set
\begin{equation*}
	E_\Sigma := \{ x  \in E \,:\, x \in \sigma + \nu_\Sigma \R \text{ for some } \sigma \in \Sigma
		\text{ and } (\sigma,x) \cap E = \emptyset \} \,.
\end{equation*}
Here, we wrote $ (\sigma,x) $ for the open interval with endpoints $\sigma$ and $x$.
\end{df}

\begin{figure}
\begin{tikzpicture}
	\fill [fill = black!12] (-4.5,-2.5) -- (-2.4,-2.5) -- (-2.4, 2.5) -- (-4.5,2.5) -- cycle;
	\draw (-3,2) node {$Z_{1,\varepsilon}$};
	\fill [fill = black!12] (-1.6,-2.5) -- (0.1,-2.5) -- (0.1, 2.5) -- (-1.6,2.5) -- cycle;
	\draw (-1,2) node {$Z_{2,\varepsilon}$};
	\fill [fill = black!12] (0.9,-0.2) -- (0.9,-2.5) -- (3,-2.5) --(3,1.94) -- cycle;
	\draw (2,-2) node {$Z_{3,\varepsilon}$};
	\fill [fill = black!12] (0.9,1) -- (0.9,2.5) --(2.44,2.5) -- cycle;
	\draw (1.4,2) node {$Z_{4,\varepsilon}$};
	\fill[fill = black!40] (0.9,1) --  (1.5,0.4) -- (0.9,-0.2) -- (0.9, -1) 
		--(0.1,-1) --(0.1,1)-- cycle;
	\draw (0.7,0.5) node {$L$};	
	\fill [fill = black!25] (-2.4,-0.4) -- (-1.6,-0.4) -- (-1.6, 0.4) -- (-2.4,0.4) -- cycle;
	\draw [line width = 1.2pt](-4.5,0) -- (0.5,0) -- (3,-1);
	\draw (-3,0.2) node {$F_1$};
	\draw (-1,0.2) node {$F_2$};
	\draw (1.7,-0.2) node {$F_3$};
	\draw(-2,-2.3) -- (-2,2.5);
	\draw[dashed](-1.6,-2.3) -- (-1.6,2.5);
	\draw[dashed](-2.4,-2.3) -- (-2.4,2.5);
	\draw[<->] (-1.6,-2.4) -- (-2.4,-2.4);
	\draw(-2,-2.7) node{$\frac{\varepsilon}{|\ln \varepsilon|}$};
	\draw(0.5,-2.5) -- (0.5,2.5);
	\draw[dashed](0.9,-2.5) -- (0.9,-0.2);
	\draw[dashed](0.1,-2.5) -- (0.1,2.5);
	\draw (0.5,0) -- (3,2.5);
	\draw[dashed] (0.9,1) -- (2.44,2.5);
	\draw[dashed] (0.9,-0.2) -- (3,1.94);
	\draw[dashed](0.9,1) -- (0.9,2.5);
\end{tikzpicture}
\caption{Sketch of the situation of Proposition \ref{limsupPolyhedral}; the gray region between
$Z_{1,\varepsilon}$ and $Z_{2,\varepsilon}$ is the set not belonging to $L$
where $v_\varepsilon$ is not constant.}  \label{definitionVepsilonSurf}
\end{figure}
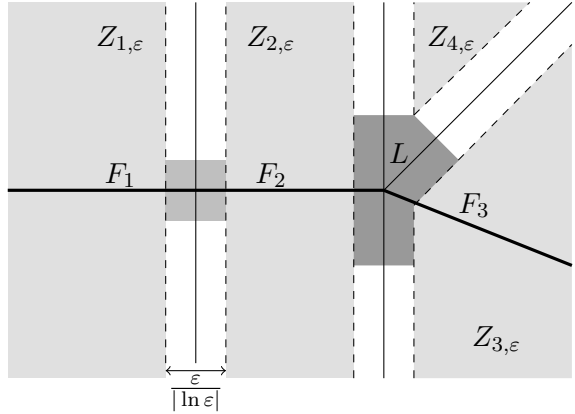

\begin{prop} \label{limsupPolyhedral}
Let $u \in \BV(\Omega, \{\alpha,\beta\})$ be a polyhedral function corresponding to a polyhedral set $P$
as in Definition \ref{D5.1}. For $n\in\mathbb{N}$ and $i\in\{1,2,\dots,n\}$ let $x_i \in \Omega \setminus S_u$,
$\zeta_i \in (0,\infty)$,
let $g$ be a piecewise constant function on polyhedral subsets of the faces 
of $S_u$ and let $\mu = \sum_{i = 1}^n \zeta_i \delta_{x_i} + g
\h^{N-1} \llcorner S_u$. Given $F_\varepsilon$ and $F$ as in  (\ref{energy}) and (\ref{limitenergy}),
there exist functions
$u_\varepsilon \in {L^1}(\Omega) $ and  $\rho_\varepsilon \in {L^1}(\Omega,[0,\infty))$, 
such that  $u_\varepsilon \rightarrow u$ in $L^1(\Omega)$
and $\frac{\rho_\varepsilon}{|\ln \varepsilon|} \mathcal{L}^N \weak* \mu$  as $\varepsilon \rightarrow 0^+$ which satisfy
\begin{equation*}
	\limsup _{\varepsilon \rightarrow 0^+} F_{\varepsilon}(u_\varepsilon, \rho_\varepsilon) 
	\leq F(u,\mu) \,.
\end{equation*}
\end{prop}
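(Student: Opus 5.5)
We take $u_\varepsilon$ to be the recovery sequence for $NL_\varepsilon$ on polyhedral functions, built from Lemma \ref{recoverycube}, Lemma \ref{mixedinteraction} and the construction of Figure \ref{definitionVepsilonSurf}. For each face $F_j$ of $S_u$ we use the one-dimensional affine profile of width $\varepsilon/|\ln\varepsilon|$ in the normal direction on the projection set $Z_{j,\varepsilon}$. Near the $(N-2)$-dimensional edges, on a region $L$ of measure $o(1)$ in the relevant scaling, we interpolate as in Lemma \ref{mixedinteraction}. From \cite{h} this gives
\[
u_\varepsilon\to u \text{ in } L^1,\qquad \limsup_\varepsilon NL_\varepsilon(u_\varepsilon)\le k\,\h^{N-1}(S_u).
\]
We will also use the localized statement: for each polyhedral piece $\Sigma\subset S_u$,
\[
\frac{1}{|\ln\varepsilon|}\int_{U}I_\varepsilon\,dx\to k\,\h^{N-1}(\Sigma\cap U)
\]
for open $U$ whose boundary meets $S_u$ in an $\h^{N-1}$-null set. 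Equivalently,
\[
\frac{I_\varepsilon}{|\ln\varepsilon|}\mathcal L^N \weak* k\,\h^{N-1}\llcorner S_u .
\]
The upper bound comes from Lemma \ref{recoverycube} on each cube and the lower bound from Proposition \ref{liminf} with $\rho=0$; combined with convergence of total mass, these give weak$*$ convergence of the nonnegative densities.

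Next we choose $\rho_\varepsilon$. On each polyhedral piece $\Sigma_m$ where $g\equiv g_m$, we set
\[
\rho_\varepsilon=\frac{g_m}{k}\,I_\varepsilon \quad\text{on the normal tube } T_{m,\varepsilon}\text{ over }\Sigma_m \text{ of width } \varepsilon/|\ln\varepsilon| .
\]
Here $I_\varepsilon$ is large, with $\int_{T_{m,\varepsilon}} I_\varepsilon\approx k|\ln\varepsilon|\h^{N-1}(\Sigma_m)$. The mass of $I_\varepsilon$ inside the tube is asymptotically all of it: the contribution of $\{u_\varepsilon\in\{\alpha,\beta\}\}$ to $\int I_\varepsilon$ splits symmetrically, and the logarithmic divergence comes from pairs at distance between $\varepsilon/|\ln\varepsilon|$ and $O(1)$. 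Since this is not obviously concentrated in the tube, a cleaner choice is
\[
\rho_\varepsilon=\frac{g_m}{k}\,I_\varepsilon \quad\text{on the whole projection region } Z_{m,\varepsilon}\cap\{\text{dist}(x,\Sigma_m)<\eta\},
\]
with $\eta\to0$ diagonally. The choice $\rho_\varepsilon = tI_\varepsilon$, $t=g_m/k$, gives the pointwise identity
\[
I_\varepsilon+|I_\varepsilon-\rho_\varepsilon| = \bigl(1+|1-t|\bigr)I_\varepsilon .
\]
Integrating and using the localized convergence, this region contributes
\[
\bigl(1+|1-g_m/k|\bigr)\,k\,\h^{N-1}(\Sigma_m)=\bigl(k+|k-g_m|\bigr)\h^{N-1}(\Sigma_m),
\]
and $\rho_\varepsilon/|\ln\varepsilon|\weak* g\,\h^{N-1}\llcorner S_u$. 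Outside these regions and away from the points we set $\rho_\varepsilon=0$, so the energy there is $2\cdot\frac{1}{|\ln\varepsilon|}\int I_\varepsilon\to0$, since $\eta\to0$ and the edge regions have vanishing measure.

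For the atoms, since $x_i\notin S_u$ we may take a ball $B_{r_\varepsilon}(x_i)$ with $r_\varepsilon\to0$ on which $u_\varepsilon$ is constant for small $\varepsilon$. There $I_\varepsilon$ is bounded by $C\,\mathrm{dist}(x,S_u)^{-1}$, so $\int_{B_{r_\varepsilon}}I_\varepsilon/|\ln\varepsilon|\to0$. We set
\[
\rho_\varepsilon=\zeta_i|\ln\varepsilon|\,|B_{r_\varepsilon}|^{-1}\chi_{B_{r_\varepsilon}(x_i)} .
\]
Then $\rho_\varepsilon/|\ln\varepsilon|\weak*\zeta_i\delta_{x_i}$, and the surfactant term on the ball is at most
\[
\zeta_i+\frac{1}{|\ln\varepsilon|}\int_{B_{r_\varepsilon}}I_\varepsilon\to\zeta_i .
\]
Summing the face, edge and atom contributions gives $\limsup F_\varepsilon(u_\varepsilon,\rho_\varepsilon)\le F(u,\mu)$, because $\mu_s=\sum\zeta_i\delta_{x_i}$ and $\mu_a=g\,\h^{N-1}\llcorner S_u$.

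The main obstacle is the localization step, namely that $I_\varepsilon/|\ln\varepsilon|\,\mathcal L^N\weak* k\,\h^{N-1}\llcorner S_u$ for the polyhedral recovery sequence. This requires a global upper bound that matches the liminf lower bound on every open set. I would first try to obtain it from the $\Gamma$-liminf of Proposition \ref{liminf} with $\rho\equiv0$ applied locally, together with the global upper bound from \cite{h}. The lower bound on each open set plus equality of total masses forces weak$*$ convergence. The point to check is that the interaction with the edge region $L$ and the cross-face interactions are of lower order; Lemmas \ref{cylindercomplement}, \ref{cylindercone} and \ref{mixedinteraction} control these by $C R^{N-1}$, which is negligible after division by $|\ln\varepsilon|$.
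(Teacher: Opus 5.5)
Your argument is correct, and its core matches the paper's. Near each face piece you take $\rho_\varepsilon=\frac{g}{k}I_\varepsilon$, so that $I_\varepsilon+|I_\varepsilon-\rho_\varepsilon|=(1+|1-g/k|)I_\varepsilon$, and everything reduces to identifying the limit of $|\ln\varepsilon|^{-1}I_\varepsilon\mathcal L^N$.

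The routes differ in that identification step.
\begin{itemize}
\item The paper constructs the glued sequence by hand and proves the convergence cube by cube (Cases 1--3). It uses Proposition \ref{liminf} for the lower bound, Lemmas \ref{recoverycube} and \ref{cylindercomplement} for the upper bound, and Lemmas \ref{cylindercone} and \ref{mixedinteraction} for the interactions between different pieces $Z_i$ and with the edge region. Since its $\rho_{\varepsilon,Z_\varepsilon}$ integrates only over $Z_\varepsilon$, the identity above holds there only up to such cross terms.
\item You import the global bound $\limsup_\varepsilon|\ln\varepsilon|^{-1}\int_\Omega I_\varepsilon\le k\h^{N-1}(S_u)$ from the $NL_\varepsilon$ recovery sequence of \cite{h}. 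You combine it with the $\Gamma$-liminf on every open set and obtain narrow convergence to $k\h^{N-1}\llcorner S_u$ by the portmanteau argument. Edge and cross-interaction contributions are then absorbed for free: any closed set meeting $S_u$ in an $\h^{N-1}$-null set carries asymptotically no energy.
\end{itemize}
Your route is shorter and uses nothing about $u_\varepsilon$ beyond its being a recovery sequence for $NL_\varepsilon$. With $\rho_\varepsilon=\frac{\tilde g}{k}I_\varepsilon$ for a bounded continuous $\tilde g\ge 0$, it even works for non-polyhedral $u$. The paper's route is explicit and local, and in effect re-proves the polyhedral upper bound for $NL_\varepsilon$ instead of taking it as a black box.

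Two minor remarks.
\begin{itemize}
\item Your tube heuristic is false, not just unproven. In the layer $I_\varepsilon\lesssim|\ln\varepsilon|/\varepsilon$, so the tube of width $\varepsilon/|\ln\varepsilon|$ carries only $O(1)$ mass per unit area. The $|\ln\varepsilon|$ comes from $I_\varepsilon(x)\approx(\beta-\alpha)^2\omega_{n-1}/\mathrm{dist}(x,S_u)$ over the scales between $\varepsilon/|\ln\varepsilon|$ and $1$. So passing to the $\eta$-neighbourhood is necessary. A fixed $\eta$ suffices if the regions are the paper's sets $Z$, whose projection onto the face is exactly the face piece. The diagonal choice $\eta\to0$ is only needed for plain normal slabs, which near obtuse edges contain $O(\eta)$ of the neighbouring face.
\item Your atom density has the correct mass. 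The paper's choice $\zeta|x-x_1|^{-N}$ on $\{\varepsilon<|x-x_1|<r\}$ gives $|\ln\varepsilon|^{-1}\int\rho_\varepsilon\to\zeta\,\h^{N-1}(\mathbb{S}^{N-1})$ and would need renormalizing.
\end{itemize}
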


\begin{proof}
Let us assume that $u$ is of the form
\begin{equation*}
	u = \left( \beta \chi_P+ \alpha \chi_{P^C} \right) |_\Omega \,, 
		\quad \text{where } P \subset \R^N \text{ is a polyhedral set and } \h^{N-1}(\partial \Omega \cap
		\partial P) = 0
\end{equation*}
and without loss of generality (the general case follows with the same arguments) that 
\begin{equation*}
	\mu = \zeta \delta_{x_1} + \gamma_1 \h^{N-1} \llcorner F_1 \,,
\end{equation*}
where $F_1$ is a polyhedral subset of a face $F$ of $P$, i.e., $g$ is given as $g = \gamma_1 \chi_{F_1}$.

We start by defining recovery sequences for restrictions $u|_Z$ of $u$ to certain sets $Z$.\\
\noindent If it holds that
  \begin{equation} \label{facenonintersection}
	\text{$Z$ is a polyhedral set such that}
\end{equation}
  \begin{itemize}
      \item[-] $\h^{N-1}(\partial Z \cap \partial P) = 0$
      \item[-] $Z$ contains at most one
	$x_i \in \Omega\setminus S_u$, for $i=1,\dots, n$\,
  \end{itemize}
we define
\begin{equation*}
	u_{\varepsilon, Z} : Z \cap \Omega \rightarrow \R \,,\, x \mapsto \left\{
		\begin{array}{lr}
			\beta, & Z \subset P \\
			\alpha, &\text{otherwise}
		\end{array} \right.
\end{equation*}
and setting $r := 1/2 \, d(x_1, \partial Z)$ we moreover define
\begin{equation*}
	\rho_{\varepsilon, Z} : Z \cap \Omega \rightarrow \R \,,\, x \mapsto \left\{
		\begin{array}{lr}
			\frac{\zeta}{|x - x_1|^N}, & |x - x_1| \in (\varepsilon, r)
				\text{ and } x_1 \in Z \\
			0, &\text{otherwise}
		\end{array} \right. \,.
\end{equation*}
Then it holds that $u_{\varepsilon,Z} \rightarrow u$ in $L^1(Z)$ and if $x_1 \in Z$,
$\frac{\rho_\varepsilon}{|\ln \varepsilon|}\mathcal{L}^N \weak* \zeta \delta_{x_1}$,
otherwise
$\frac{\rho_\varepsilon}{|\ln \varepsilon|}\mathcal{L}^N \weak*0$.
This means that $\frac{\rho_\varepsilon}{|\ln \varepsilon|}\mathcal{L}^N \llcorner Z \weak* \mu \llcorner Z$.

Then we have
\begin{equation*}
	\limsup_{\varepsilon \rightarrow 0^+}
		F_{\varepsilon}(u_{\varepsilon,Z},\rho_{\varepsilon,Z},Z \cap \Omega) = \zeta \delta_{x_1}(Z)
		= |\mu_s|(Z) \leq F(u,\mu,Z) \,.
\end{equation*}

If instead it holds that 
\begin{equation} \label{faceintersection}
	\text{$Z$ is a polyhedral set such that}
\end{equation}
\begin{itemize} 
	\item[-] $Z \cap \partial P$ is a polyhedral subset $F_1$ of a face $F \subset \partial P$
	\item[-] $x_1 \not \in Z$
	\item[-] $g$ is constant on $F_1$
	\item[-] the orthogonal projection of $Z$ to the affine hyperplane containing $F$ is equal to $F_1$
\end{itemize}
we assume for notational convenience that $F \subset \{ x_N  = 0 \}$ and that the outer normal of
$P$ is $e_N$ on $F$ and set
\begin{equation*}
	u_{\varepsilon,Z} : Z \cap \Omega \rightarrow \R \,,\, x \mapsto \left\{
	\begin{array}{lr}
		\beta, &x_N \in \left(-\infty, - \frac{\varepsilon}{2 |\ln \varepsilon|} \right)\\
		\beta + (\alpha - \beta) \frac{|\ln \varepsilon|}{\varepsilon}
			\left(x_N + \frac{\varepsilon}{2 |\ln \varepsilon|} \right), 
		& x_N \in \left( -\frac{\varepsilon}{2 |\ln \varepsilon|},
			\frac{\varepsilon}{2 |\ln \varepsilon|} \right) \\
		\alpha, & x_N \in \left( \frac{\varepsilon}{2 |\ln \varepsilon|}, +\infty \right)
	\end{array} \right. \,.
\end{equation*}
We observe that $u_\varepsilon$ converges to $u$ in $L^1(Z)$ as $\varepsilon \rightarrow 0^+$.
We define
\begin{equation*}
		Z_\varepsilon := \{ x \in Z \,:\, d(x,\partial Z) > \varepsilon/(2 | \ln \varepsilon|) \} \,, \\
\end{equation*}
and, recalling the definition of $k$ in (\ref{kappa}), we define
\begin{equation*}
	\rho_{\varepsilon,Z_\varepsilon} : Z \cap \Omega \rightarrow \R \,,\, x \mapsto \left\{
	\begin{array}{lr}
		\displaystyle
		\frac{\gamma_1}{k} \int_{Z_\varepsilon} \frac{(u_{\varepsilon,Z}(y) - u_{\varepsilon,Z}(x))^2}
			{|y-x|^{N+1}}	\,dy, & x \in Z_\varepsilon \\
		0, & \text{otherwise}
	\end{array} \right. \,.
\end{equation*}

We now claim that $\rho_{\varepsilon,Z_\varepsilon} \weak* \mu \llcorner Z$.
Given $R > 0$, we find finitely many pairwise disjoint $R$-cubes $Q_R(x_0)$, where $x_0 \in R \mathbb{Z}^N$,
which cover $Z$ and have nonempty intersection with $Z$ up to sets of zero measure. We call the set of these cubes $\mathbf{Q}_R$.
We distinguish three cases. 
\newline
\textbf{Case 1.} Let us first fix a cube $Q_R(x_0) \in \mathbf{Q}_R$
such that $x_0 \not \in F_1$. Note that in this case $Q_{R}(x_0)\cap F_1=\emptyset$.
We claim that $\rho_{\varepsilon, Z_\varepsilon}(x) \leq C < \infty$  for all $x \in Q_R(x_0)$.
Here, $C$ does not depend on $x$. We compute (for sufficiently small $\varepsilon$)
\begin{align*}
	\rho_{\varepsilon, Z_\varepsilon}(x) &= \frac{\gamma_1}{k} \Bigg(
		\int_{\{ y \in  Z_\varepsilon \,:\, |y_N| > \frac{\varepsilon}{2 |\ln \varepsilon|}\}}
			\frac{(u_{\varepsilon,Z}(y) - u_{\varepsilon,Z}(x))^2}{|y-x|^{N+1}} \,dy,
		\\
        &\quad\quad\quad+ \int_{\{ y \in  Z_\varepsilon \,:\, |y_N| \leq \frac{\varepsilon}{2 |\ln \varepsilon|}\}}
				\frac{(u_{\varepsilon,Z}(y) - u_{\varepsilon,Z}(x))^2}{|y-x|^{N+1}} \,dy \Bigg) \\
	&\leq  \frac{\gamma_1}{k} \int_{\{ y \in  Z_\varepsilon \,:\, |y_N| \leq 
		\frac{\varepsilon}{2 |\ln \varepsilon|}\}}
		\frac{(u_{\varepsilon,Z}(y) - u_{\varepsilon,Z}(x))^2}{|y-x|^{N+1}} \,dy \\
	&\leq \frac{\gamma_1}{k} \int_{\{ y \in  Z_\varepsilon \,:\, |y_N| \leq 
		\frac{\varepsilon}{2 |\ln \varepsilon|}\}}
		\frac{(\beta - \alpha)^2}{(R/2)^{N+1}} \,dy \leq C \,.
\end{align*}
\textbf{Case 2.} 
We fix a cube $Q := Q_R(x_0) \subset Z_\varepsilon$ such that
$x_0 \in F_1$. We set $Q' := Q'_R(x_0)$ and $Z_{Q'} := (Q' \times \R) \cap Z_\varepsilon$. Then it holds that
\begin{align*}
	\frac{1}{|\ln \varepsilon|}\int_Q \rho_{\varepsilon, Z_\varepsilon}(x) \,dx &= \frac{1}{|\ln \varepsilon|}
		\Bigg( \int_Q \frac{\gamma_1}{k} \int_{Z_\varepsilon}
			\frac{(u_{\varepsilon,Z}(y) - u_{\varepsilon,Z}(x))^2}{|y-x|^{N+1}} \,dy \,dx \Bigg) \\
	&= \frac{1}{|\ln \varepsilon|} \Bigg( \int_Q\frac{\gamma_1}{k} \int_{Z_{Q'}}
			\frac{(u_{\varepsilon,Z}(y) - u_{\varepsilon,Z}(x))^2}{|y-x|^{N+1}} \,dy \,dx \\
            &\quad\quad+
            \int_Q\frac{\gamma_1}{k} \int_{Z_\varepsilon \setminus Z_{Q'}}
			\frac{(u_{\varepsilon,Z}(y) - u_{\varepsilon,Z}(x))^2}{|y-x|^{N+1}} \,dy \,dx \Bigg) \\
	&\leq \frac{1}{|\ln \varepsilon|} \Bigg( \int_Q\frac{\gamma_1}{k} \int_{Z_{Q'} \setminus Q}
			\frac{(u_{\varepsilon,Z}(y) - u_{\varepsilon,Z}(x))^2}{|y-x|^{N+1}} \,dy \,dx \\
            &\quad\quad+
			 \int_Q\frac{\gamma_1}{k} \int_{Q}
			\frac{(u_{\varepsilon,Z}(y) - u_{\varepsilon,Z}(x))^2}{|y-x|^{N+1}} \,dy \,dx \\
	&\quad +  \int_{Z_{Q'}} \frac{\gamma_1}{k} \int_{Z_\varepsilon \setminus Z_{Q'}}
			\frac{(u_{\varepsilon,Z}(y) - u_{\varepsilon,Z}(x))^2}{|y-x|^{N+1}} \,dy \,dx 
			\Bigg)
\end{align*}
The first addend can be bounded by a constant. This follows by the same computation as in case 1 above.
The last addend, by applying Lemma \ref{cylindercomplement}, is also bounded by a constant.
We apply the liminif inequality to the secquences $u_\varepsilon=u_{\varepsilon,Z}$ and $\rho_\varepsilon := 0$ on
$\Omega := Q$ to get the following bound of the second addend:
\begin{equation*}
	\liminf_{\varepsilon \rightarrow 0^+} \frac{1}{|\ln \varepsilon|} \int_Q\frac{\gamma_1}{k} \int_{Q}
		\frac{(u_{\varepsilon,Z}(y) - u_{\varepsilon,Z}(x))^2}{|y-x|^{N+1}} \,dy \,dx
		\geq\frac{\gamma_1}{k} k \h^{N-1}(Q') = \gamma_1 \h^{N-1}(Q') \,.
\end{equation*}
Moreover, we can bound from above using Lemma \ref{recoverycube}
\begin{equation*}
	\limsup_{\varepsilon \rightarrow 0^+} \frac{1}{|\ln \varepsilon|} \int_Q\frac{\gamma_1}{k} \int_{Q}
		\frac{(u_{\varepsilon,Z}(y) - u_{\varepsilon,Z}(x))^2}{|y-x|^{N+1}} \,dy \,dx
		\leq \frac{\gamma_1}{k} k \h^{N-1}(Q') = \gamma_1 \h^{N-1}(Q') \,.
\end{equation*}
Hence, we obtain
\begin{equation}\label{rhocase2}
	\lim_{\varepsilon \rightarrow 0^+} 
		\frac{1}{|\ln \varepsilon|}\int_Q \rho_{\varepsilon, Z_\varepsilon}(x) \,dx = \gamma_1 \h^{N-1}(Q')
		= \mu_a(Q) = \mu(Q)\,.
\end{equation}
Let us note for later use that a completely analogous computation shows that		
\begin{align} \label{ucase2}
	\lim_{\varepsilon \rightarrow 0^+} &
		\frac{1}{|\ln \varepsilon|}\int_Q\int_Z \frac{(u_{\varepsilon,Z} (y) - u_{\varepsilon,Z} (x))^2}
		{|y-x|^{N+1}} \,dy \,dx
		+ \frac{1}{|\ln \varepsilon|}\int_Q \left| \int_Z 
		\frac{(u_{\varepsilon,Z} (y) - u_{\varepsilon,Z} (x))^2}{|y-x|^{N+1}} \,dy
		- \rho_{\varepsilon, Z_\varepsilon}(x) \right| \,dx \nonumber \\
	&= \h^{N-1}(Q') (k + |k - \gamma_1|) \,.
\end{align}

\textbf{Case 3.} Let us consider a cube $\hat Q \in \mathbf{Q}_R$ such that
$\hat Q \not \subset Z_\varepsilon$ and $\hat Q \cap F_1 \neq \emptyset$. Then it holds
(here we extended $u_{\varepsilon,Z}$ to $\Omega$ using the same definition as above)
\begin{align*}
	\frac{1}{|\ln \varepsilon|}&\int_{\hat Q} \rho_{\varepsilon, Z_\varepsilon}(x) \,dx 
		= \frac{1}{|\ln \varepsilon|}
		\Bigg( \int_{\hat Q \cap Z_\varepsilon} \frac{\gamma_1}{k} \int_{Z_\varepsilon}
			\frac{(u_{\varepsilon,Z}(y) - u_{\varepsilon,Z}(x))^2}{|y-x|^{N+1}} \,dy \,dx \Bigg) \\
	&= \frac{1}{|\ln \varepsilon|} \Bigg( \int_{\hat Q \cap Z_\varepsilon}\frac{\gamma_1}{k}
		\int_{Z_\varepsilon \setminus \hat Q \cap Z_\varepsilon}
			\frac{(u_{\varepsilon,Z}(y) - u_{\varepsilon,Z}(x))^2}{|y-x|^{N+1}} \,dy \,dx \\
            &\quad\quad+
			 \int_{\hat Q \cap Z_\varepsilon} \frac{\gamma_1}{k}
			 \int_{\hat Q \cap Z_\varepsilon}
			\frac{(u_{\varepsilon,Z}(y) - u_{\varepsilon,Z}(x))^2}{|y-x|^{N+1}} \,dy \,dx \Bigg) \\
	&\leq \frac{1}{|\ln \varepsilon|} \Bigg( \int_{\hat Q \cap Z}\frac{\gamma_1}{k}
		\int_{Z \setminus \hat Q \cap Z}
			\frac{(u_{\varepsilon,Z}(y) - u_{\varepsilon,Z}(x))^2}{|y-x|^{N+1}} \,dy \,dx \\
            &\quad\quad+
			 \int_{\hat Q \cap Z} \frac{\gamma_1}{k}
			 \int_{\hat Q \cap Z}
			\frac{(u_{\varepsilon,Z}(y) - u_{\varepsilon,Z}(x))^2}{|y-x|^{N+1}} \,dy \,dx \Bigg) \\
	&\leq \frac{1}{|\ln \varepsilon|} \Bigg( \int_{\hat Q}\frac{\gamma_1}{k}
		\int_{Z \setminus \hat Q \cap Z}
			\frac{(u_{\varepsilon,Z}(y) - u_{\varepsilon,Z}(x))^2}{|y-x|^{N+1}} \,dy \,dx \\
            &\quad\quad+
			 \int_{\hat Q} \frac{\gamma_1}{k}
			 \int_{\hat Q}
			\frac{(u_{\varepsilon,Z}(y) - u_{\varepsilon,Z}(x))^2}{|y-x|^{N+1}} \,dy \,dx \Bigg) \,.
\end{align*}
We note that by Lemma \ref{cylindercomplement} the first addend inside the parenthesis 
is bounded by a constant and the second one by Lemma \ref{recoverycube} is bounded by
$C |\ln \varepsilon| \h^{N-1}(\hat Q \cap F_1)$.
This means that 
\begin{equation*}
	\frac{1}{|\ln \varepsilon|}\int_{\hat Q} \rho_{\varepsilon, Z_\varepsilon}(x) \,dx \leq C
		\h^{N-1}(\hat Q \cap F_1)
\end{equation*}
for a constant independent of $\varepsilon$ and $R$. Since the number of the cubes $\hat Q$ is bounded
by $C R^{2-N}$ for $\varepsilon$ sufficiently small and $C$ independent of $\varepsilon$, it holds
\begin{equation}
\label{upper_bound_surfactant_sum_cubes}
		 \sum_{ \{ \hat Q \in \mathbf{Q}_R \,:\, \hat Q \not \subset Z_\varepsilon,
		\hat Q \cap F_1 \neq \emptyset\}} \frac{1}{|\ln \varepsilon|}
		\int_{\hat Q} \rho_{\varepsilon, Z_\varepsilon}(x) \,dx \leq C R^{N-1} R^{2-N} = C R \,.
\end{equation}
Therefore, combining \eqref{upper_bound_surfactant_sum_cubes} with  the estimate found in Case 1, we have that
\begin{align*}
	\lim_{R \rightarrow 0^+} \lim_{\varepsilon \rightarrow 0^+}& \frac{1}{|\ln \varepsilon|}
		\Bigg( \int_{\cup \{ Q \in \mathbf{Q}_R \}} \rho_{\varepsilon,Z_\varepsilon}(x) \,dx
		- \int_{\cup \{ Q \in \mathbf{Q}_R \,:\, Q \subset Z_\varepsilon, Q \cap F_1 \neq \emptyset\}}
		 \rho_{\varepsilon,Z_\varepsilon}(x) \,dx \Bigg) \\
	&=\lim_{R \rightarrow 0^+} \lim_{\varepsilon \rightarrow 0^+} \frac{1}{|\ln \varepsilon|}
		\int_{\cup \{ \hat Q \in \mathbf{Q}_R \,:\, \hat Q \not \subset Z_\varepsilon,
		\hat Q \cap F_1 \neq \emptyset\}} \rho_{\varepsilon,Z_\varepsilon}(x) \,dx\\
	&=\lim_{R \rightarrow 0^+} CR = 0
	\end{align*}
	
Hence, 
\begin{equation} \label{rhocase13}
	\lim_{R \rightarrow 0^+} \lim_{\varepsilon \rightarrow 0^+} \frac{1}{|\ln \varepsilon|}
		 \int_{\cup \{ Q \in \mathbf{Q}_R \}} \rho_{\varepsilon,Z_\varepsilon}(x) \,dx
	= \lim_{R \rightarrow 0^+} \lim_{\varepsilon \rightarrow 0^+} \frac{1}{|\ln \varepsilon|}
		\int_{\cup \{ Q \in \mathbf{Q}_R \,:\, Q \subset Z_\varepsilon, Q \cap F_1 \neq \emptyset\}}
		 \rho_{\varepsilon,Z_\varepsilon}(x) \,dx \,.
\end{equation}
Let us note for later use that a completely analogous computation shows that		
\begin{align} \label{ucase13}
	&\lim_{R \rightarrow 0^+} \lim_{\varepsilon \rightarrow 0^+}
		\frac{1}{|\ln \varepsilon|} \int_{\cup \{ Q \in \mathbf{Q}_R \}}
		\int_Z \frac{(u_{\varepsilon,Z} (y) - u_{\varepsilon,Z} (x))^2}{|y-x|^{N+1}} \,dy \,dx \\
	&\quad\quad + \frac{1}{|\ln \varepsilon|}\int_{\cup \{ Q \in \mathbf{Q}_R \}} \left|
		\int_Z \frac{(u_{\varepsilon,Z} (y) - u_{\varepsilon,Z} (x))^2}{|y-x|^{N+1}} \,dy
		- \rho_{\varepsilon, Z_\varepsilon}(x) \right| \,dx \nonumber \\
	&= \lim_{R \rightarrow 0^+} \lim_{\varepsilon \rightarrow 0^+}
		\frac{1}{|\ln \varepsilon|}
		\int_{\cup \{ Q \in \mathbf{Q}_R \,:\, Q \subset Z_\varepsilon, Q \cap F_1 \neq \emptyset\}}
		\int_Z \frac{(u_{\varepsilon,Z} (y) - u_{\varepsilon,Z} (x))^2}{|y-x|^{N+1}} \,dy \,dx \nonumber \\
	&\quad\quad + \frac{1}{|\ln \varepsilon|}
		\int_{\cup \{ Q \in \mathbf{Q}_R \,:\, Q \subset Z_\varepsilon, Q \cap F_1 \neq \emptyset\}} \left|
		\int_Z \frac{(u_{\varepsilon,Z} (y) - u_{\varepsilon,Z} (x))^2}{|y-x|^{N+1}} \,dy
		- \rho_{\varepsilon, Z_\varepsilon}(x) \right| \,dx \nonumber \,.
\end{align}
Combining (\ref{rhocase2}) and (\ref{rhocase13}) and approximating smooth functions by piecewise constant ones we obtain that for every smooth test function
$\varphi$ it holds that
\begin{equation*}
	\int_Z \varphi \rho_{\varepsilon, Z_\varepsilon} \,dx \rightarrow \int_Z \varphi \,d\mu
		\quad \text{as } \varepsilon \rightarrow 0^+\,.
\end{equation*}
This shows the claim.
We next claim that
$\limsup_{\varepsilon \rightarrow 0^+} F_\varepsilon(u_{\varepsilon,Z}, \rho_\varepsilon, Z) \leq F(u,\mu,Z)$.
Again, we extended $u_{\varepsilon,Z}$ to $\Omega$
using the same definition as above, and we note that
\begin{equation*}
	\frac{1}{\varepsilon} \int_Z W(u_{\varepsilon,Z}) \,dx \rightarrow 0
\end{equation*}
as $\varepsilon \rightarrow 0^+$. Then, by (\ref{ucase13}) and (\ref{ucase2}), we get
\begin{align*}
	\limsup_{\varepsilon \rightarrow 0^+} F_\varepsilon&(u_{\varepsilon,Z},
		\rho_{\varepsilon,Z_\varepsilon}, Z) 
		\leq	\lim_{R \rightarrow 0^+} \lim_{\varepsilon \rightarrow 0^+}
		\frac{1}{|\ln \varepsilon|} \int_{\cup \{ Q \in \mathbf{Q}_R \}}
		\int_Z \frac{(u_{\varepsilon,Z} (y) - u_{\varepsilon,Z} (x))^2}{|y-x|^{N+1}} \,dy \,dx \\
	&\quad\quad + \frac{1}{|\ln \varepsilon|}\int_{\cup \{ Q \in \mathbf{Q}_R \}} \left|
		\int_Z \frac{(u_{\varepsilon,Z} (y) - u_{\varepsilon,Z} (x))^2}{|y-x|^{N+1}} \,dy
		- \rho_{\varepsilon, Z_\varepsilon}(x) \right| \,dx \nonumber \\
	&= \lim_{R \rightarrow 0^+} \lim_{\varepsilon \rightarrow 0^+}
		\frac{1}{|\ln \varepsilon|}
		\int_{\cup \{ Q \in \mathbf{Q}_R \,:\, Q \subset Z_\varepsilon, Q \cap F_1 \neq \emptyset\}}
		\int_Z \frac{(u_{\varepsilon,Z} (y) - u_{\varepsilon,Z} (x))^2}{|y-x|^{N+1}} \,dy \,dx \\
	&\quad\quad + \frac{1}{|\ln \varepsilon|}
		\int_{\cup \{ Q \in \mathbf{Q}_R \,:\, Q \subset Z_\varepsilon, Q \cap F_1 \neq \emptyset\}} \left|
		\int_Z \frac{(u_{\varepsilon,Z} (y) - u_{\varepsilon,Z} (x))^2}{|y-x|^{N+1}} \,dy
		- \rho_{\varepsilon, Z_\varepsilon}(x) \right| \,dx \\
	&\leq  \lim_{R \rightarrow 0^+} \h^{N-1}(\cup \{ Q' \,:\, Q \in \mathbf{Q}_R \text{ and } Q \cap F_1 \neq \emptyset \})
		(k + |k - \gamma_1|) \\
	& = \h^{N-1}(F_1) (k + |k - \gamma_1|) \,.
\end{align*}
Let us now assume that there are given two disjoint sets $Z_1,Z_2$, such that both $Z_1$ and $Z_2$
satisfy (\ref{facenonintersection}) or (\ref{faceintersection}) and let us set
$u_{\varepsilon,Z_1},u_{\varepsilon,Z_2}$
and $\rho_{\varepsilon,Z_{\varepsilon,1}},\rho_{\varepsilon,Z_{\varepsilon,2}}$
for the corresponding functions as introduced above. 
We define
\begin{align*}
	&Z_{1,\varepsilon} := \{ x \in Z_1 \,:\, d(x,\partial Z_1) > \varepsilon/(2 |\ln \varepsilon|) \} \,, \\
	&Z_{2,\varepsilon} := \{ x \in Z_2 \,:\, d(x,\partial Z_2) > \varepsilon/(2 |\ln \varepsilon|) \} \,,
\end{align*}
and set $Z_\varepsilon := Z_{1,\varepsilon} \cup Z_{2,\varepsilon}$,
\begin{align*}
	&\tilde u_\varepsilon: Z_\varepsilon \cap \Omega \rightarrow \R, x \mapsto \left\{
	\begin{array}{lr}
			u_{\varepsilon,Z_1}(x), & x \in Z_{1,\varepsilon} \\
			u_{\varepsilon,Z_2}(x), &x \in Z_{2,\varepsilon}
		\end{array} \right. \,, \\ 
	&\rho_\varepsilon: (Z_1 \cup Z_2) \cap \Omega \rightarrow \R, x \mapsto \left\{
	\begin{array}{lr}
			\rho_{\varepsilon,Z_{\varepsilon,1}}(x), & x \in Z_1 \\
			\rho_{\varepsilon,Z_{\varepsilon,2}}(x), &x \in Z_2
		\end{array} \right. \,.
\end{align*}
We note that $\tilde u_\varepsilon \rightarrow u$ in $L^1$ and that
$\frac{\rho_\varepsilon}{|\ln \varepsilon|} \weak* \mu$ in any union of two sets compactly contained in $Z_1$ or $Z_2$
as $\varepsilon \rightarrow 0^+$.
Using Lemma \ref{cylindercone} we have that
\begin{align*}
	&G \left(Z_{1,\varepsilon} \cap \{\tilde u_\varepsilon = \alpha\} , 
		Z_{2,\varepsilon} \cap \{\tilde u_\varepsilon > \alpha\} , \Omega \right) < C \,, \\
	&G \left(  Z_{1,\varepsilon} \cap \{\tilde u_\varepsilon = \beta\} ,
		 Z_{2,\varepsilon} \cap \{\tilde u_\varepsilon < \beta\}, \Omega \right) < C \,.
\end{align*}
Moreover, by the very definition of $G$ it holds that
\begin{align*}
	&G \left(  Z_{1,\varepsilon} \cap \{\tilde u_\varepsilon = \alpha\} ,
		  Z_{2,\varepsilon} \cap \{\tilde u_\varepsilon = \alpha\}, \Omega \right) = 0 \,, \\
	&G \left( Z_{1,\varepsilon} \cap \{\tilde u_\varepsilon = \beta\} , 
		 Z_{2,\varepsilon} \cap \{\tilde u_\varepsilon = \beta\}, \Omega \right) = 0 \,,
\end{align*}
and the same holds true with the roles of $Z_{1,\varepsilon},Z_{2,\varepsilon}$ interchanged.
Therefore we have that
\begin{align*}
	\limsup_{\varepsilon \rightarrow 0^+} F_\varepsilon(\tilde u_\varepsilon,
		\rho_\varepsilon, Z_\varepsilon)  &
		\leq \limsup_{\varepsilon \rightarrow 0^+} F_\varepsilon
		(\tilde u_\varepsilon, \rho_\varepsilon,  Z_{1,\varepsilon})
		+ \limsup_{\varepsilon \rightarrow 0^+} F_\varepsilon
		(\tilde u_\varepsilon, \rho_\varepsilon, Z_{2,\varepsilon}) \\
	&\leq \limsup_{\varepsilon \rightarrow 0^+} F_\varepsilon
		(u_{\varepsilon,Z_1}, \rho_{\varepsilon,Z_{\varepsilon,1}},  Z_1)
		+ \limsup_{\varepsilon \rightarrow 0^+} F_\varepsilon
		(u_{\varepsilon,Z_2}, \rho_{\varepsilon,Z_{2,\varepsilon}}, Z_2) \\
	&\leq F(u,\mu,Z_1 \cup Z_2)  \,.
\end{align*}
Since we can find a covering of $\Omega$ with finitely many pairwise disjoint sets $Z_1 \dots, Z_n$
as in (\ref{facenonintersection}) or (\ref{faceintersection}),
we can repeat the procedure now using
$Z_1, \dots, Z_n$ instead of $Z_1,Z_2$ and find
functions $\tilde u_\varepsilon: \Omega \cap \left(Z_{1,\varepsilon} \cup
\dots \cup Z_{n,\varepsilon} \right)\rightarrow \R$ and 
$\rho_\varepsilon: \Omega \cap \left(Z_1 \cup \dots \cup Z_n \right)\rightarrow \R$
such that
\begin{equation*}
	\limsup_{\varepsilon \rightarrow 0^+} F_\varepsilon(\tilde u_\varepsilon, \rho_\varepsilon,
		Z_{1,\varepsilon} \cup \dots \cup Z_{n,\varepsilon} )
		\leq F(u,\mu) \,.
\end{equation*}
On the set $\Omega \setminus \left(Z_{1,\varepsilon} \cup \dots \cup Z_{n,\varepsilon} \right)$ we can find
a function $v_\varepsilon: \Omega \setminus \left(Z_{1,\varepsilon} \cup \dots \cup Z_{n,\varepsilon} \right)
\rightarrow \R$, which is piecewise affine, its restriction $v_\varepsilon|_{\partial Z_{i,\varepsilon}}$
coincides with the restritcion $\tilde u_\varepsilon|_{\partial Z_{i,\varepsilon}}$ for all $i$, the set where it is non-constant
has measure bounded by $C \frac{\varepsilon^2}{|\ln \varepsilon|^2}$  and its gradient is bounded by
$\tilde C \frac{\varepsilon}{|\ln \varepsilon|}$  (where $C$ and $\tilde C$ do not depend on
$\varepsilon$).
This can be seen as follows (see also Figure \ref{definitionVepsilonSurf}). For any pair $(Z_i, Z_j)$ with a common face $F$,
we define for all points $x \in F$ which satisfy that
$B_{\varepsilon/|\ln \varepsilon|}(x) \subset Z_i \cup Z_j$ on the segment connecting the point
$(\partial Z_{i,\varepsilon} \cap B_{\varepsilon/ (2|\ln \varepsilon|)}(x))$ and the point
$(\partial Z_{j,\varepsilon} \cap B_{\varepsilon/ (2|\ln \varepsilon|)}(x))$
the function $v_\varepsilon$ to be the affine interpolation between 
$u_{\varepsilon}(\partial Z_{i,\varepsilon} \cap B_{\varepsilon/ (2 |\ln \varepsilon|)}(x))$
and $u_{\varepsilon}(\partial Z_{j,\varepsilon} \cap B_{\varepsilon/ (2 |\ln \varepsilon|)}(x))$.
This means that the function $v_\varepsilon$ is well defined  for any point in $\Omega \setminus \left(Z_{1,\varepsilon} \cup \dots \cup Z_{n,\varepsilon} \right)$
which are in the situation as described above.
The remaining set $L$ where $v_\varepsilon$ needs to be defined
is contained in a $C \frac{\varepsilon}{|\ln\varepsilon|}$-neighbourhood of the $(N-2)$-dimensional edges
of the sets $Z_1, \dots Z_n$ and therefore has Lebesgue measure bounded by
$C \frac{\varepsilon^2}{|\ln \varepsilon|^2}$. On this set,
we choose any  affine interpolation between the boundary values for $v_\varepsilon$ whose gradient on the
affine regions is bounded by $C \frac{|\ln \varepsilon|}{\varepsilon}$ and $C$ does not depend on
$\varepsilon$. Then $v_\varepsilon$ can be non-constant only on this set $L$ or on  a
$C \frac{\varepsilon}{|\ln \varepsilon|}$-neighbourhood of the 
$(N-2)$-dimensional intersections of faces of $Z_i$ and  $P$ and therefore
on a set of measure bounded by $C \frac{\varepsilon^2}{|\ln_\varepsilon|^2}$.
We can then use Lemma \ref{cylindercone} to show that
\begin{align*}
	&G \left(Z_{i,\varepsilon} \cap \{\tilde u_\varepsilon > \alpha\} , 
		\{v_\varepsilon = \alpha\} , \Omega \right) < C \,, \\
	&G \left(  Z_{i,\varepsilon} \cap \{\tilde u_\varepsilon < \beta\} ,
		 \{v_\varepsilon = \beta\}, \Omega \right) < C
\end{align*}
and by the very definition of $G$ it holds
\begin{align*}
	&G \left(Z_{i,\varepsilon} \cap \{\tilde u_\varepsilon = \alpha\} , 
		\{v_\varepsilon = \alpha\} , \Omega \right) = 0 \,, \\
	&G \left(  Z_{i,\varepsilon} \cap \{\tilde u_\varepsilon = \beta\} ,
		 \{v_\varepsilon = \beta\}, \Omega \right) = 0\,.
\end{align*}
We define
\begin{equation*}
	u_\varepsilon: \Omega \rightarrow \R, x \mapsto \left\{
	\begin{array}{lr}
			\tilde u_\varepsilon(x), & x \in \cup_{i = 1, \dots ,n}Z_{i,\varepsilon} \\
			v_\varepsilon(x), &x \in \Omega \setminus \cup_{i = 1, \dots ,n}Z_{i,\varepsilon} \,.
		\end{array} \right.
\end{equation*}
Then $u_\varepsilon$ converges to $u$ as $\varepsilon \rightarrow 0^+$ on $\Omega$.
Recall that $\rho_\varepsilon$ is defined (piecewise) on $\Omega = (Z_1 \cup \dots \cup Z_n) \cap \Omega$
and converges weak$\ast$ to $\mu$. \newline
We now apply Lemma \ref{mixedinteraction} to see that 
$\limsup_{\varepsilon \rightarrow 0^+}  \int_{\left\{ \alpha < v_\varepsilon < \beta \right\}} \int_\Omega 
		\frac{\left(u_\varepsilon(y) - u_\varepsilon(x) \right)^2}{|y-x|^{N+1}} \,dy \,dx$
can be estimated from above using Lemma \ref{recoverycube} and Lemma \ref{cylindercomplement}
and we get
\begin{equation*}
\limsup_{\varepsilon \rightarrow 0^+} \frac{1}{|\ln \varepsilon|}  \int_{\left\{ \alpha < v_\varepsilon < \beta \right\}} \int_\Omega 
		\frac{\left(u_\varepsilon(y) - u_\varepsilon(x) \right)^2}{|y-x|^{N+1}} \,dy \,dx = 0 \,.
\end{equation*}
Hence, we obtain that
\begin{equation*}
	\limsup_{\varepsilon \rightarrow 0^+} F_\varepsilon(u_\varepsilon, \rho_\varepsilon) \leq F(u,\mu) \,.
\end{equation*}
\end{proof}

Making use of the previous proposition, we now consider the case of a
function $u\in\BV(\Omega,\{\alpha,\beta\})$ and of 
a measure $\mu$ which concentrates its $\h^{N-1}$-absolutely continuous part on projections of finitely many
$(N-1)$ -dimensional polyhedral sets on the jump set of $u$.
\begin{prop} \label{temp8}
Let $u \in \BV(\Omega, \{\alpha,\beta\})$, $n\in\mathbb{N}$ and for $i\in\{1,2,\dots,n\}$ let $x_i \in \Omega \setminus S_u$,
$\zeta_i \in (0,\infty)$,
let $g$ be a piecewise constant function on the projections on $S_u$ of finitely many $(N-1)$-dimensional
polyhedral sets in $\R^N$ and let $\mu = \sum_{i = 1}^n \zeta_i \delta_{x_i} + g
\h^{N-1} \llcorner S_u$. Then there exist functions
$u_\varepsilon \in {L^1}(\Omega) $ and  $\rho_\varepsilon \in {L^1}(\Omega,[0,\infty))$, 
such that  $u_\varepsilon \rightarrow u$ in $L^1(\Omega)$
and $\frac{\rho_\varepsilon}{|\ln \varepsilon|} \mathcal{L}^N \weak* \mu$  as $\varepsilon \rightarrow 0^+$ which satisfy
\begin{equation}
	\limsup _{\varepsilon \rightarrow 0^+} F_{\varepsilon}(u_\varepsilon, \rho_\varepsilon) 
	\leq F(u,\mu).
\end{equation}
\end{prop}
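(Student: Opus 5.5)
We reduce to Proposition \ref{limsupPolyhedral} by approximating $u$ with polyhedral functions and transporting the surfactant density along with them, then close with a diagonal argument. By standard density results (see e.g.\ \cite{afp}), there exist polyhedral functions $u_j \in \BV(\Omega,\{\alpha,\beta\})$ corresponding to polyhedral sets $P_j$ with $u_j \to u$ in $L^1(\Omega)$ and $\h^{N-1}(S_{u_j}) \to \h^{N-1}(S_u)$. Moreover $\h^{N-1}\llcorner S_{u_j} \weak* \h^{N-1}\llcorner S_u$, and the normals converge in the sense of Reshetnyak. We may also arrange that no $x_i$ lies on $S_{u_j}$, since the finitely many points can be avoided by small perturbations of the faces.

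Next we build approximating measures $\mu_j = \sum_i \zeta_i\delta_{x_i} + g_j \h^{N-1}\llcorner S_{u_j}$. Write $g = \sum_m c_m \chi_{(S_u)_{\Sigma_m}}$, where $\Sigma_m$ are the $(N-1)$-dimensional polyhedral sets and $(S_u)_{\Sigma_m}$ their projections onto $S_u$ in the sense of Definition \ref{D5.1}. Set $g_j := \sum_m c_m \chi_{(S_{u_j})_{\Sigma_m}}$. Each $(S_{u_j})_{\Sigma_m}$ is a finite union of polyhedral subsets of faces of $P_j$, so $g_j$ is piecewise constant on polyhedral subsets of faces. This makes $(u_j,\mu_j)$ admissible in Proposition \ref{limsupPolyhedral}.

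The key claim is that $\mu_j \weak* \mu$ and
\begin{equation*}
	F(u_j,\mu_j)=\sum_m\int_{(S_{u_j})_{\Sigma_m}}\bigl(k+|k-c_m|\bigr)\,d\h^{N-1}+k\,\h^{N-1}\Bigl(S_{u_j}\setminus \textstyle\bigcup_m (S_{u_j})_{\Sigma_m}\Bigr)+\sum_i\zeta_i \longrightarrow F(u,\mu).
\end{equation*}
This reduces to showing $\h^{N-1}\llcorner (S_{u_j})_{\Sigma_m} \weak* \h^{N-1}\llcorner (S_u)_{\Sigma_m}$ for each $m$. I expect this to be the main obstacle, since projection onto a varying surface is not continuous in general. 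The approach is to choose the $P_j$ compatibly with the cylinders $\Sigma_m + \nu_{\Sigma_m}\R$, for instance by approximating $\{u=\beta\}\cap(\Sigma_m+\nu_{\Sigma_m}\R)$ separately inside each such cylinder. It is also harmless to first perturb the $\Sigma_m$ so that their cylinder boundaries carry no $\h^{N-1}\llcorner S_u$ mass. With this in place, Reshetnyak continuity localized to each open cylinder gives convergence of the localized perimeters. The "first hit" condition $(\sigma,x)\cap E=\emptyset$ in the projection needs extra care. If it is troublesome, I would instead replace $g$ by $g\chi_{\text{cylinder}}$-type densities, which define the same measure up to an arbitrarily small error in $F$. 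Lower semicontinuity alone would only give $\liminf F(u_j,\mu_j)\ge F(u,\mu)$, so the precise convergence of perimeters inside each cylinder is genuinely needed for the $\limsup$ direction.

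Finally, Proposition \ref{limsupPolyhedral} gives for each $j$ sequences $u_{j,\varepsilon}\to u_j$ and $\rho_{j,\varepsilon}/|\ln\varepsilon|\weak*\mu_j$ with $\limsup_\varepsilon F_\varepsilon(u_{j,\varepsilon},\rho_{j,\varepsilon})\le F(u_j,\mu_j)$. Weak$\ast$ convergence on bounded sets of $\mathcal M_+(\Omega)$ is metrizable, and the masses $\mu_j(\Omega)$ are bounded. A diagonal choice $j(\varepsilon)\to\infty$ therefore yields $u_\varepsilon\to u$ in $L^1$, $\rho_\varepsilon/|\ln\varepsilon|\weak*\mu$, and
\begin{equation*}
	\limsup_{\varepsilon\to0^+} F_\varepsilon(u_\varepsilon,\rho_\varepsilon)\le\lim_{j\to\infty} F(u_j,\mu_j)=F(u,\mu).
\end{equation*}
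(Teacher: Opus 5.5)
Your route is the paper's: approximate $u$ strictly by polyhedral $u_j$, transport the $\Sigma_m$ to $S_{u_j}$ by projection, apply Proposition~\ref{limsupPolyhedral} to $(u_j,\mu_j)$, and diagonalize (the paper calls this lower semicontinuity of the $\Gamma$-$\limsup$). The gap is the only substantive step: $\mu_j\overset{*}{\rightharpoonup}\mu$ together with $F(u_j,\mu_j)\to F(u,\mu)$. You do not prove it, and the paper merely asserts it. The mechanism you sketch cannot prove it, because perimeter convergence localized to the cylinders $\Sigma_m+\nu_{\Sigma_m}\mathbb{R}$ says nothing about first-hit sets.

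For example, take $\Omega$ a large ball, $E=(0,1)^N$, $u=\beta\chi_E+\alpha\chi_{\Omega\setminus E}$ and $\Sigma=(-1,2)^{N-1}\times\{-1\}$. Then $(S_u)_\Sigma$ is, up to a null set, the bottom face of $E$. The frustums
\[
P_j=\Bigl\{x:\ 0<x_N<1,\ x'\in\bigl(\tfrac{1-x_N}{j},\,1-\tfrac{1-x_N}{j}\bigr)^{N-1}\Bigr\}
\]
converge to $E$ strictly, also inside $\Sigma+e_N\mathbb{R}$. However, each of their $2(N-1)$ lateral faces has outer normal with negative $e_N$-component. So for every point $x$ of such a face, the segment from $(x',-1)\in\Sigma$ to $x$ misses $\partial P_j$. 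Hence $\mathcal{H}^{N-1}((S_{u_j})_\Sigma)\to 2N-1\neq 1$, and $\mu_j$ does not converge to $\mu$. The same happens for a round cylinder $E$, where approximation cannot be avoided. Making the projections converge would therefore require a specially adapted choice of $P_j$ for an arbitrary rectifiable $S_u$, which is a real statement you would still have to prove.

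Your fallback does not fix this. The measure $c_m\chi_{\Sigma_m+\nu_{\Sigma_m}\mathbb{R}}\,\mathcal{H}^{N-1}\llcorner S_u$ also charges the parts of $S_u$ over $\Sigma_m$ that are not hit first (in the example, the top and lateral faces). So it differs from the prescribed $\mu$ by a non-small amount. Similarly, perturbing the $\Sigma_m$ changes $\mu$.

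A clean way to close the gap is to avoid projections altogether and use lower semicontinuity of the $\Gamma$-$\limsup$ in $\mu$ as well.

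\emph{Smoothing the density.} For $\delta>0$ pick $\phi\in C_c(\Omega)$, $\phi\ge 0$, with $\int_{S_u}|\phi-g|\,d\mathcal{H}^{N-1}<\delta$, and set $\mu^\delta:=\sum_i\zeta_i\delta_{x_i}+\phi\,\mathcal{H}^{N-1}\llcorner S_u$. Since $a\mapsto|k-a|$ is $1$-Lipschitz, $|F(u,\mu^\delta)-F(u,\mu)|<\delta$ and $|\mu^\delta-\mu|(\Omega)<\delta$.

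\emph{Passing to the polyhedral approximations.} Strict convergence of $u_j$ gives narrow convergence $\mathcal{H}^{N-1}\llcorner S_{u_j}\to\mathcal{H}^{N-1}\llcorner S_u$ in $\Omega$. Since $\phi$ and $k+|k-\phi|$ are bounded and continuous, this yields
\[
\phi\,\mathcal{H}^{N-1}\llcorner S_{u_j}\overset{*}{\rightharpoonup}\phi\,\mathcal{H}^{N-1}\llcorner S_u,\qquad \int_{S_{u_j}}\bigl(k+|k-\phi|\bigr)\,d\mathcal{H}^{N-1}\to\int_{S_u}\bigl(k+|k-\phi|\bigr)\,d\mathcal{H}^{N-1}.
\]

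\emph{Discretizing and concluding.} Replace $\phi|_{S_{u_j}}$ by some $g_j\ge 0$ that is constant on the cells of a fine polyhedral subdivision of the faces of $P_j$, with $\sup|g_j-\phi|\le\eta_j\to 0$. This makes $\bigl(u_j,\sum_i\zeta_i\delta_{x_i}+g_j\mathcal{H}^{N-1}\llcorner S_{u_j}\bigr)$ admissible in Proposition~\ref{limsupPolyhedral}. It perturbs both the measure and $F$ by at most $\eta_j\mathcal{H}^{N-1}(S_{u_j})\to 0$. Your diagonal argument, run over $j$ and then over $\delta$, then concludes. First-hit sets never appear, and the same argument yields Proposition~\ref{limsup} directly.
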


\begin{proof}
Without loss of generality, let us assume that $g = \gamma \chi_{\Sigma}$ for $\gamma \in (0,\infty)$
and there exists $\Sigma_g \subset \R^N$, a finite union of 
$(N-1)$-dimensional polyhedral subsets, such that $\Sigma$ is the projection of $\Sigma_g$ on $S_u$.
Let us moreover assume without loss of generality that 
\begin{equation*}
\mu =\zeta \delta_{x_1} + \gamma \h^{N-1} \llcorner \Sigma \,.
\end{equation*}

Then, there exists (see \cite{ab}, Section 5.4) a sequence of polyhedral functions $u_h \in \BV(\Omega, \{\alpha,\beta\})$ 
such that
\begin{equation*}
	u_h \rightarrow u \text{ in } L^1(\Omega), \quad |D u_h| \rightarrow |D u|.
\end{equation*}
We may moreover assume that $x_1 \not \in S_{u_h}$ for all $h$.
If we write $\Sigma_h$ for the projection of $\Sigma_g$ to $S_{u_h}$, we have that
$\Sigma_h$ is a polyhedral subset of $S_{u_h}$ and that
\begin{equation*}
	\mu_h := \zeta \delta_{x_1} + \gamma \h^{N-1} \llcorner \Sigma_h \weak* \mu.
\end{equation*}
We apply  Proposition \ref{limsupPolyhedral} to $u_h$ and $\mu_h$ and
use the lower semicontinuity of $\gammalimsup$  to obtain
\begin{align*}
	\gammalimsup_{\varepsilon \rightarrow 0^+}& F_{\varepsilon} (u,\mu) 
		\leq \liminf_{h \rightarrow \infty} \gammalimsup_{\varepsilon \rightarrow 0^+} 
		F_{\varepsilon} (u_h,\mu_h) \\
	& \leq \liminf_{h \rightarrow \infty} \int_{S_{u_h}} k + \left| k -  
		\frac{d (\mu_h)_a}{d \h^{N-1} \llcorner S_{u_h}} \right| \,d \h^{N-1} + |(\mu_h)_s|(\Omega) \\
	&=F(u,\mu) \,.
\end{align*}
\end{proof}

\begin{figure}
\begin{tikzpicture}
	\begin{scope}
		\clip(0,0) ellipse [x radius = 3, y radius = 2.2];
		\fill [fill = black!12] (3,-2.5) -- (-1,-3) -- (-4,1.5) -- (-1,3.5) -- cycle ;
	\end{scope}
	\begin{scope}
		\clip (-3,0) -- (0,2) -- (2,-1) -- (-1.66,-2) -- cycle;
		\draw [line width = 1.2 pt] (-3,0) .. controls (-2,-1) and (-1,2) .. (0.5,0);
		\draw [line width = 1.2 pt] (0.5,0) .. controls (1.8,-2) and (2,2) .. (2.7,-1);
	\draw [line width = 1.2 pt] (-3,0) -- (-0.5,1) -- (1,0) -- (2,0.6) -- (2.9,-0.5);
	\end{scope}
	\draw(0,0) ellipse [x radius = 3, y radius = 2.2];
	\draw (-3,0) .. controls (-2,-1) and (-1,2) .. (0.5,0);
	\draw (0.5,0) .. controls (1.8,-2) and (2,2) .. (2.7,-1);
	\draw (-3,0) -- (0,2);
	\draw (-3,0) -- (-0.5,1) -- (1,0) -- (2,0.6) -- (2.9,-0.5);
	\draw (0.4,1.6) node {$\Omega$};
	\draw (1.7,-0.7) node {$S_u$};
	\draw (-2.2,-0.4) node {$\Sigma$};
	\draw (-1.5,1.3) node {$\Sigma_g$};
	\draw (0.5,0.7) node {$\Sigma_h$};
	\draw (1.7,0.9) node {$S_{u_h}$};
\end{tikzpicture}
\caption{Sketch of the situation of Proposition \ref{temp8} in the case $N=2$. The thick black lines represents the projection of $\Sigma_g$ on the interfaces.}
\end{figure}
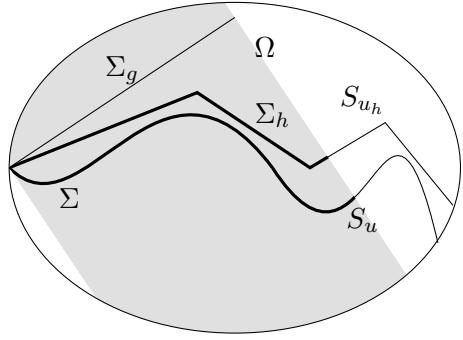

\begin{prop}[$\Gamma$-limsup inequality] \label{limsup}
Given $u \in \BV(\Omega, \{\alpha,\beta\})$ and a Radon measure $\mu \in \mathcal{M}_+(\Omega)$,
there are functions $u_\varepsilon \in {L^1}(\Omega) $, $u_\varepsilon \rightarrow u$ and  
$\rho_\varepsilon \in {L^1}(\Omega,[0,\infty))$, $\frac{\rho_\varepsilon}{|\ln \varepsilon|} \weak* \mu$
for $\varepsilon \rightarrow 0^+$, such that
\begin{equation}
	\limsup _{\varepsilon \rightarrow 0^+} F_{\varepsilon}(u_\varepsilon, \rho_\varepsilon,\Omega) 
	\leq F(u,\nu)\,.
\end{equation}
\end{prop}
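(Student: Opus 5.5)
We reduce the general case to Proposition \ref{temp8} by a density argument, using the lower semicontinuity of the $\Gamma$-$\limsup$ with respect to the product topology (strong $L^1$ on functions, weak$\ast$ on measures). It then suffices to construct, for given $u\in\BV(\Omega,\{\alpha,\beta\})$ and $\mu\in\mathcal{M}_+(\Omega)$ with $F(u,\mu)<\infty$, a sequence $\mu_j\weak*\mu$ of the form allowed in Proposition \ref{temp8}, with $u$ kept fixed, such that
\[
\limsup_{j\to\infty}F(u,\mu_j)\le F(u,\mu).
\]
Then
\[
\Gamma\text{-}\limsup F_\varepsilon(u,\mu)\le\liminf_j \Gamma\text{-}\limsup F_\varepsilon(u,\mu_j)\le \liminf_j F(u,\mu_j)\le F(u,\mu),
\]
and a diagonal argument gives the recovery sequence $(u_\varepsilon,\rho_\varepsilon)$. (The measure $\nu$ in the statement is $\mu$.)

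The approximation treats the two parts of $\mu$ separately. Write $\mu=g\,\h^{N-1}\llcorner S_u+\mu_s$ with $g\in L^1(S_u,\h^{N-1})$, $g\ge0$.

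\emph{Singular part.} Partition $\Omega$ into small cubes of side $1/j$. In each cube $Q$ with $\mu_s(Q)>0$, place a Dirac mass of weight $\mu_s(Q)$ at a point of $Q\setminus S_u$. Such a point exists because $S_u$ is $\mathcal{L}^N$-null. We can arrange that $\mu_s(\partial Q)=0$ by translating the grid. The resulting atomic measures converge weakly$\ast$ to $\mu_s$ and have total variation $\mu_s(\Omega)$. Since the atoms lie off $S_u$, they remain singular with respect to $\h^{N-1}\llcorner S_u$.

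\emph{Absolutely continuous part.} Approximate $g$ in $L^1(S_u,\h^{N-1})$ by functions $g_j$ of the form required in Proposition \ref{temp8}, namely piecewise constant on projections onto $S_u$ of finitely many $(N-1)$-dimensional polyhedral sets. The plan is as follows. Since $S_u$ is countably $(N-1)$-rectifiable, up to an $\h^{N-1}$-small set it is covered by finitely many $C^1$ graphs over hyperplanes, each with Lipschitz constant close to $0$. On each graph, approximate $g$ in $L^1$ by simple functions constant on images of polyhedral subsets of the base hyperplane. These images are exactly projections of polyhedral sets onto $S_u$ in the sense of Definition \ref{D5.1}. The convergence $g_j\to g$ in $L^1(\h^{N-1}\llcorner S_u)$ then gives weak$\ast$ convergence of the measures. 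It also gives
\[
\int_{S_u}|k-g_j|\,d\h^{N-1}\to\int_{S_u}|k-g|\,d\h^{N-1},
\]
because $t\mapsto|k-t|$ is $1$-Lipschitz. Combined with the singular part, this yields $F(u,\mu_j)\to F(u,\mu)$.

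The main obstacle is the density step for $g$. The projection in Definition \ref{D5.1} requires the segment $(\sigma,x)$ to avoid $S_u$. A general rectifiable $S_u$ may fold over a fixed hyperplane, so the projection of a polyhedral set may pick up only the "first" sheet. I would handle this by working locally: use a Lebesgue/density-point covering of $S_u$ by small cylinders in which $S_u$ is, up to small $\h^{N-1}$-measure, a single Lipschitz graph over the tangent hyperplane. Then choose polyhedral $\Sigma_g$ lying slightly on the correct side so that it projects onto the intended sheet. The error committed on the small exceptional set is controlled by the absolute continuity of $\int g\,d\h^{N-1}$, and there we simply set $g_j=0$, which costs at most $k\,\h^{N-1}$ of a small set.
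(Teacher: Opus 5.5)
Your proposal follows the paper's proof. You approximate $\mu$ by $g_j\,\h^{N-1}\llcorner S_u$ plus finitely many atoms in $\Omega\setminus S_u$, apply Proposition \ref{temp8}, and conclude by lower semicontinuity of the $\Gamma$-$\limsup$. You are more explicit than the paper on two points it leaves implicit:
\begin{itemize}
\item you give the atoms total mass exactly $\mu_s(\Omega)$, whereas weak$\ast$ convergence alone would only give that their masses have $\liminf$ at least $\mu_s(\Omega)$, which is the wrong direction;
\item you note that $t\mapsto|k-t|$ is $1$-Lipschitz, which is what turns $g_j\to g$ in $L^1$ into $F(u,\mu_j)\to F(u,\mu)$.
\end{itemize}

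Your sketch of the density of admissible $g_j$ (a step the paper simply asserts) does not work as written. The projection in Definition \ref{D5.1} is taken along the whole line $\sigma+\nu_\Sigma\R$, in both directions, and onto all of $S_u$. Placing $\Sigma_g$ just on one side of the local sheet therefore only controls the near side. The rays going the other way leave your small cylinder and also select the first points of $S_u$ they meet there. That set need not have small $\h^{N-1}$-measure: take $g$ supported on the middle one of three parallel interfaces, and whichever side you choose, the far rays land on an outer interface over a set of comparable measure.

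So the graph images are not ``exactly'' projections in the sense of Definition \ref{D5.1}, and you need an extra device. One option is to isolate the sheet as the intersection of the projections of two translates of $\Sigma_g$, one on each side of it. This in turn requires reading the admissible class in Proposition \ref{temp8} as functions constant on the cells generated by finitely many projections. Since the paper offers no argument at all here, this is a gap you share with it rather than a departure from its route.
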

\begin{proof}
Let us set
\begin{equation*}
g := \frac{d \mu}{d \h^{N-1} \llcorner S_u} \in L^1(S_u,\h^{N-1}).
\end{equation*}
We observe that there exist simple functions
\begin{equation*}
	g_h = \sum_{i=0}^{n_h} \gamma_{h,i} \chi_{K'_{h,i}} \,,
\end{equation*}
where $ K'_{h,i} \subset S_u, i = 1, \dots , n_h$, are projections of polyhedral sets  on $S_u$ and $ K'_{h,i}$
are pairwise disjoint,
which satisfy
\begin{equation*}
	\lim_{h \rightarrow \infty} g_h = g \text{ in } L^1(S_u,{\mathcal H}^{N-1}) \,.
\end{equation*}
By the boundedness of $\mu$, we find measures $\sum_{i=0}^{m_h} \zeta_{h,i} \delta_{x_{h,i}}$
supported outside $S_u$ such that
\begin{equation*}
	\sum_{i=0}^{m_h} \zeta_{h,i} \delta_{x_{h,i}} \weak* \mu  - g {\mathcal H}^{N-1} \llcorner S_u.
\end{equation*}
It follows that
\begin{equation*}
	\mu_h := g_h {\mathcal H}^{N-1} \llcorner S_u
	+ \sum_{i=0}^{m_h} \zeta_{h,i} \delta_{x_{h,i}} \weak* \mu.
\end{equation*}
We can finally combine the lower-semicontinutiy of the $\Gamma$-$\limsup$ with the result of
Proposition \ref{temp8} to obtain that
\begin{align*}
	\gammalimsup_{\varepsilon \rightarrow 0^+} F_{\varepsilon}(u,\mu)
		&\leq \liminf_{h \rightarrow \infty} \,  \gammalimsup_{\varepsilon \rightarrow 0^+}
		 F_{\varepsilon}(u,\mu_h) \\
	& \leq \liminf_{h \rightarrow \infty} \int_{S_u} k + \left| k -  
		\frac{d (\mu_h)_a}{d \h^{N-1} \llcorner S_{u_h}} \right| \,d \h^{N-1} + |(\mu_s)_h|(\Omega) \\
	&= F(u,\mu) \,.
\end{align*}
This concludes the proof.
\end{proof}

\textbf{Acknowledgments.} The first author is member of INdAM - GNAMPA Project, codice CUP E53C25002010001.


\begin{thebibliography}{a}
\bibitem{ab} Alberti, G. and Bellettini G., A non-local anisotropic
model for phase transitions: asymptotic behaviour of rescaled energies, European Journal of Applied Mathematics vol. 9, 1998.
\bibitem{ab2} Alberti, G. and Bellettini G., A non-local anisotropic
model for phase transitions. {I}. {T}he optimal profile problem, Math. Ann., 310, (1998), 527--560.
\bibitem{abs} Alberti, G., Bouchitt\'e, G. and Seppecher, P., Un r\'esultat de perturbations
	singuli\`eres avec la norme $H^{1/2}$, C.R. Acad. Sci. Paris, t. 319, S\'erie I, p. 333 --338, 1994.
 \bibitem{abcs} Alicandro, R., Braides A., Cicalese, M. and Solci, M.,
	 Discrete variational problems with interfaces, Cambridge Monographs on
	 Applied and Computational Mathematics, 40, Cambridge University Press, Cambridge, 2024.
\bibitem{acs} Alicandro, R., Cicalese, M. and Sigalotti, L., Phase transitions in presence of surfactants:
	from discrete to continuum, Interfaces Free Bound., 14 no. 1, (2012), 65--103.
\bibitem{afp} Ambrosio, L., Fusco, N. and Pallara, D., Functions of
	Bounded Variation and Free Discontinuity Problems, Clarendon Press, Oxford, 2000.
\bibitem{b} Braides, A., {$\Gamma$}-convergence for beginners, 
	Oxford Lecture Series in Mathematics and its Applications, {\bf 22},
	Oxford University Press, Oxford, 2002, xii+218.
\bibitem{cfs1} Cicalese, M., Fusco, G. and Savaré, G., Crystalline motions of discrete interfaces in the Blume-Emery-Griffiths model, Arxiv https://arxiv.org/abs/2510.18403, 2025
\bibitem{cfs2} Cicalese, M., Fusco, G. and Savaré, G., Crystalline motions of discrete interfaces in the Blume-Emery-Griffiths model: partial wetting, Arxiv https://arxiv.org/abs/2512.22870, 2025
\bibitem{ch1} Cicalese, M., Heilmann, T., Surfactants in a non-local model for phase transitions, ESAIM: Control, Optimisation and Calculus of Variations 31 (2025), 58.
\bibitem{ch2} Cicalese, M., Heilmann, T., Surfactants in the two gradient theory of phase transitions[J]. Mathematics in Engineering, 2025, 7(4), 522-552.
\bibitem{dm} Dal Maso, G., An introduction to $\Gamma$-convergence, Progress in Nonlinear Differential Equations and their Application,
	8, Birkhauserr Boston, xiv+340, 1993.
\bibitem{fms} Fonseca I., Morini M. and Slastikov, V., Surfactants in Foam Stability:
	A Phase-Field Model, Arch. Rational Mech. Anal.
\bibitem{h} Heilmann, T., $\Gamma$-convergence for nonlocal phase transitions involving the
    $H^{1/2}$ norm, preprint, https://arxiv.org/abs/2507.11054.
\bibitem{m} Modica L., The Gradient Theory of Phase Transitions and the Minimal Interface Criterion, Arch. 
	Rational Mech. Anal. 98, p. 123--142, 1987.
\bibitem{mm} Modica, L. and Mortola, S., Un esempio di $\Gamma$-convergenza, Boll. Un. Mat. Ital. B (5),
	14, p. 285--299, 1977.
\bibitem{sv1} Savin, O. and Valdinoci, E., $\Gamma$-convergence for nonlocal phase transitions,
	Ana. I. H. Poincar\'e AN 29 (2012) 479 -- 500.
\bibitem{sv2} Savin, O. and Valdinoci, E., Density estimates for a variational model driven 
	by the Gagliardo norm, J. Math. Pures Appl. 101, p. 1--26, 2014.
\bibitem{psv} Palatucci, G., Savin, O. and Valdinoci, E., Local and global minimizers for a variational energy
	involving a fractional norm, Annali di Matematica 192, p. 673--718, 2013.

	
\end{thebibliography}
\end{document}